\newtheorem{thm}{Theorem}
\newtheorem{lem}[thm]{Lemma}
\newtheorem{prop}[thm]{Proposition}
\newtheorem{corr}[thm]{Corollary}
\newtheorem{conj}[thm]{Conjecture}
\newcommand{\re}{\operatorname{Re}}
\newcommand{\twoFone}{{}_2F_1}
\newcommand{\centeredtheta}{\sigma_3}
\newcommand{\rescaledcenteredtheta}{\widehat{\sigma}_3}
\title{The Taylor coefficients of the Jacobi theta constant $\theta_3$}
\author{Dan Romik\footnote{
Department of Mathematics, 
University of California, Davis, One Shields Ave., Davis, CA 95616, USA. Email:
\texttt{romik@math.ucdavis.edu}
}}
\begin{document}

\maketitle

\begin{abstract}
We study the Taylor expansion around the point $x=1$ of a classical modular form, the Jacobi theta constant $\theta_3$. This leads naturally to a new sequence $(d(n))_{n=0}^\infty=1,1,-1,51,849,-26199,\ldots$ of integers, which arise as the Taylor coefficients in the expansion of a related ``centered'' version of $\theta_3$. We prove several results about the numbers $d(n)$ and conjecture that they satisfy the congruence $d(n)\equiv (-1)^{n-1}\ (\textrm{mod }5)$ and other similar congruence relations.
\end{abstract}

\renewcommand{\thefootnote}{\fnsymbol{footnote}} 
\footnotetext{\emph{Key words:} theta function, Jacobi theta constant, modular form
}     
\footnotetext{\emph{2010 Mathematics Subject Classification:} 
11F37,
14K25,
30B10
}
\renewcommand{\thefootnote}{\arabic{footnote}}

\section{Introduction}

\subsection{The derivatives of the theta constant $\theta_3(x)$ at $x=1$}

The Jacobi theta constant (or ``thetanull'') function $\theta_3$ is defined by
\begin{equation} \label{eq:def-theta}
\theta_3(x) = \sum_{n=-\infty}^\infty e^{-\pi n^2 x} = 1 + 2 \sum_{n=1}^\infty e^{-\pi n^2 x} \qquad (\re{x}>0),
\end{equation}
and is one of the most classical and well-studied objects in number theory, being intimately tied to the study of integer partitions, representations of integers as sums of squares, the Riemann zeta function, modular forms, and much else. In this paper we will study the Taylor coefficients of $\theta_3(x)$ around $x=1$, that is, the sequence of derivatives $\theta_3^{(n)}(1)$. This is motivated by the fact that $\theta_3(x)$ satisfies the well-known modular transformation property
\begin{equation}
\label{eq:theta-modular}
\theta_3\left(\frac1x\right) = \sqrt{x} \,\theta_3(x), 
\end{equation}
which makes the point $x=1$ left fixed under $x\mapsto 1/x$ a natural point near which to try to understand the local behavior of $\theta_3(x)$ (in particular as a way of gaining new insight into the content of \eqref{eq:theta-modular}). Two additional relevant motivating facts from the literature are the explicit evaluation
\begin{equation}
\label{eq:theta-eval}
\theta_3(1) 
= \frac{\Gamma\left(\tfrac14\right)}{\sqrt{2}\pi^{3/4}},
\end{equation}
(where $\Gamma(\cdot)$ is the Euler gamma function) ---
see \cite[p.~325]{ram5} --- and the relation
$$
\theta_3'(1) = -\frac14 \theta_3(1),
$$
which follows on differentiating \eqref{eq:theta-modular} and setting $x=1$ (as pointed out for example in \cite[p.~17]{edwards}). It seems natural to ask about the values of the higher derivatives $\theta_3''(1), \theta_3'''(1), \ldots$, but this problem does not appear to have been previously addressed in the literature (although related questions have been considered; see Section~\ref{subsec:previouswork}). It turns out that these values can be expressed in terms of $\theta_3(1)$ and an additional constant
$$
\Omega = \frac{\Gamma\left(\tfrac14\right)^8}{32 \pi^4}.
$$
For example, we will show that the first few are given by
\begin{align*}
\theta_3''(1) &= \phantom{-}\frac{1}{16}\theta_3(1)\left(3 + \Omega\right), 
\\[3pt]
\theta_3'''(1) &= -\frac{1}{64}\theta_3(1)\left(15 +15 \Omega\right), 
\\[3pt]
\theta_3^{(4)}(1) &= \phantom{-}\frac{1}{256}\theta_3(1)\left(105+210 \Omega - \Omega^2\right).
\end{align*}

These formulas are special cases of a more general result, which introduces a curious new object: an integer sequence $(d(n))_{n=0}^\infty$ that will be our main object of study.

\begin{thm}[Taylor coefficients of $\theta_3(x)$]
\label{thm:theta-taylor}
There exists a sequence of integers $$(d(n))_{n=0}^\infty = 1,1,-1,51,849, -26199, 1341999, \ldots$$ with the property that
\begin{equation} 
\label{eq:theta-taylor}
\theta_3^{(n)}(1) = \theta_3(1) \cdot \frac{(-1)^n }{4^n} \sum_{k=0}^{\lfloor n/2\rfloor} \frac{(2n)!}{2^{n-2k}(4k)!(n-2k)! } d(k) \Omega^k  \qquad (n\ge 0).
\end{equation}
\end{thm}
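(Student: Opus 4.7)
The plan is to use the modular relation \eqref{eq:theta-modular} to introduce a centered version of $\theta_3$ whose Maclaurin expansion is in a single $x \mapsto 1/x$-invariant variable, extract from it the combinatorial identity \eqref{eq:theta-taylor}, and finally establish integrality of the $d(k)$'s. The M\"obius coordinate $y = (x-1)/(x+1)$ conjugates $x \mapsto 1/x$ to $y \mapsto -y$ and fixes $x = 1$. Using \eqref{eq:theta-modular}, one checks that
\[
\centeredtheta(y) \;:=\; \frac{\sqrt{(x+1)/2}}{\theta_3(1)}\,\theta_3(x), \qquad x = \frac{1+y}{1-y},
\]
is invariant under $x \mapsto 1/x$, hence is an even function of $y$; write $\centeredtheta(y) = \sum_{k \ge 0} \gamma_k y^{2k}$ with $\gamma_0 = 1$. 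This gives the factorization $\theta_3(x) = \theta_3(1)\sqrt{2/(x+1)}\,\centeredtheta((x-1)/(x+1))$.

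Next, I would expand the modular prefactor $\sqrt{2/(x+1)}$ as a binomial series (whose coefficients involve the double factorials $(2j-1)!! = (2j)!/(2^j j!)$ responsible for the $(2n)!$ in \eqref{eq:theta-taylor}) and $y^{2k}$ as a negative binomial series, then convolve and apply a Chu--Vandermonde-type identity to arrive at
\[
\theta_3^{(n)}(1)/\theta_3(1) \;=\; \frac{(-1)^n}{4^n} \sum_{k=0}^{\lfloor n/2 \rfloor} \frac{(2n)!}{2^{n-2k}(4k)!(n-2k)!}\cdot 4^k(2k)!\,\gamma_k.
\]
Comparing with \eqref{eq:theta-taylor}, the theorem reduces to the claim that $\gamma_k = d(k)\Omega^k/(4^k(2k)!)$ with $d(k) \in \Z$.

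The main work lies in this reduction. A priori each $\gamma_k$ is a polynomial in $\Omega$ of degree $\le k$ with rational coefficients, and the claim forces all subleading powers of $\Omega$ to cancel, leaving a monomial with integer numerator under the normalization $d(k) = 4^k(2k)!\,\gamma_k/\Omega^k$. I would produce these cancellations and the integer structure by deriving a polynomial ODE for $\centeredtheta(y)$ from the algebro-differential theory of the theta constants: using $\theta_3(x)^2 = (2/\pi)K(k(x))$ together with the Picard--Fuchs equation for the complete elliptic integral $K$, the Legendre relation $2KE - K^2 = \pi/2$ evaluated at the self-dual point $x = 1$ (where $k(1) = 1/\sqrt 2$), and the identity $\Omega = \pi^2\theta_3(1)^8/2$ implicit in \eqref{eq:theta-eval}, and then translating this ODE into a recurrence for the $\gamma_k$'s. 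The principal obstacle is producing an ODE and recurrence with precisely the right structure to force both the monomial-in-$\Omega$ form of $\gamma_k$ and the integrality of $d(k)$; the first two steps are essentially formal, whereas the arithmetic heart of the theorem lives in this ODE analysis and the valuation-theoretic tracking of denominators along the recursion.
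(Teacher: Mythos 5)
Your first two steps reproduce the paper's strategy: the centered function you define is (up to the sign of the coordinate and a normalization by $\theta_3(1)$) the paper's $\sigma_3(z)=\frac{1}{\sqrt{1+z}}\,\theta_3\bigl(\frac{1-z}{1+z}\bigr)$, and your convolution of the binomial prefactor with the even series is exactly the computation of Section~\ref{sec:proof-theta-taylor}, which the paper organizes via the expansion of $(1-4u)^{-(4n+1)/2}$. One remark on framing: with the paper's convention the quantity $d(k)=4^k(2k)!\,\gamma_k/\Omega^k$ is simply \emph{defined} this way, so there is no ``cancellation of subleading powers of $\Omega$'' to arrange; the entire content of the theorem beyond your formal manipulation is the single assertion $d(k)\in\Z$.

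That assertion is precisely what your proposal does not prove, and you say so yourself (``the principal obstacle is producing an ODE and recurrence with precisely the right structure\ldots''). This is a genuine gap, not a routine verification. The ODE route you sketch faces concrete difficulties: the theta constant satisfies a \emph{third-order nonlinear} ODE (Jacobi/Ohyama), so the induced recurrence for the $\gamma_k$ needs the seed value $\gamma_1$ --- equivalently $\theta_3''(1)$, which is exactly where $\Omega$ first enters --- from some independent evaluation; the equation appears squared, creating branch ambiguities; and, most seriously, a nonlinear recurrence with rational coefficients gives no control over denominators without substantial additional structure, so integrality cannot be dispatched by ``valuation-theoretic tracking'' announced in advance. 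The paper in fact derives such an ODE for $\sigma_3$ (Section~\ref{sec:sigma-ode}) but pointedly offers it only as ``a possible alternative approach''; its actual proof of integrality runs through Jacobi's identity $\theta_3\bigl(G(1-t)/G(t)\bigr)=\sqrt{G(t)}$ with $G={}_2F_1(\tfrac12,\tfrac12;1;\cdot)$, the expansion of $G$ about $t=\tfrac12$ in terms of ${}_2F_1(\tfrac14,\tfrac14;\tfrac12;\cdot)$ and ${}_2F_1(\tfrac34,\tfrac34;\tfrac32;\cdot)$, the resulting generating function identity (Theorem~\ref{thm:dofn-genfun}), and the recurrence of Theorem~\ref{thm:recurrence-dofn}, whose coefficients $r(n,k)$ are shown to be integers by the exponential formula. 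Until you supply an argument of comparable substance for $d(k)\in\Z$, the proof is incomplete.
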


The initial values $(d(n))_{n=0}^{20}$ are tabulated in the Appendix. Later we will present two different methods for computing the $d(n)$'s.

\subsection{A centered version of $\theta_3$ and the integer sequence $(d(n))_{n=0}^\infty$}

While \eqref{eq:theta-taylor} gives an answer of sorts to the original question we posed, the relative messiness of the expansion on the right-hand side of \eqref{eq:theta-taylor} serves to obscure somewhat the underlying phenomena at play and the true significance of the integer sequence $(d(n))_{n=0}^\infty$. This significance is made more apparent by a change of coordinates: define a function $\centeredtheta(z)$ of a complex variable by
\begin{equation} \label{eq:def-sigma}
\centeredtheta(z) = \frac{1}{\sqrt{1+z}} \theta_3\left(\frac{1-z}{1+z}\right) \qquad (|z|<1).
\end{equation}
A trivial calculation shows that the modular transformation relation \eqref{eq:theta-modular} is equivalent to the simpler relation
\begin{equation} \label{eq:sigma-even} 
\centeredtheta(-z) = \centeredtheta(z),
\end{equation}
that is, the statement that $\centeredtheta(z)$ is an even function. (Geometrically, note that the change of coordinates $x\mapsto z=\frac{1-x}{1+x}$ maps the right half-plane $\{ \re(x)>0 \}$ conformally to the unit disk, and that under this change of coordinates the M\"obius inversion $x\mapsto 1/x$ translates to the simple reflection $z\mapsto -z$.) Thus, we can think of $\centeredtheta(z)$ as a version of $\theta_3$ that is ``centered'' around $x=1$. This point of view naturally suggests considering the Taylor expansion of $\centeredtheta(z)$ around $z=0$ (the point in the $z$-plane whose preimage under the coordinate change is $x=1$), which encodes the same information as \eqref{eq:theta-taylor} but packaged in a different way. Our next result shows that the coefficients in this expansion are, modulo trivial factors, the integers $d(n)$.

\begin{thm}[Taylor expansion of $\centeredtheta(z)$]
\label{thm:sigma-taylor}
The function $\centeredtheta(z)$ has the Taylor expansion 
\begin{equation}\label{eq:sigma-taylor}
\centeredtheta(z) = \theta_3(1)\sum_{n=0}^\infty \frac{d(n)}{(2n)!} \Phi^n z^{2n} \qquad (|z|<1),
\end{equation}
where we denote
$ \displaystyle
\Phi = \frac\Omega4  = \frac{\Gamma\left(\tfrac14\right)^8}{128 \pi^4}
$.
\end{thm}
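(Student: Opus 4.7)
The plan is to derive the Taylor expansion of $\centeredtheta(z)$ directly from Theorem~\ref{thm:theta-taylor}, by substituting its formula into the Taylor series of $\theta_3$ at $x=1$, swapping the order of summation, evaluating the resulting inner sum in closed form, and then applying the change of variable from the definition of $\centeredtheta$.

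First I would write $\theta_3(1+w)/\theta_3(1) = \sum_{m\ge 0} (\theta_3^{(m)}(1)/(m!\,\theta_3(1)))\,w^m$, plug in the formula of Theorem~\ref{thm:theta-taylor}, and exchange the order of summation so that the outer sum runs over the index $k$ labelling $d(k)\Omega^k$:
\begin{equation*}
\theta_3(1+w) = \theta_3(1) \sum_{k\ge 0} \frac{d(k)\,\Omega^k}{(4k)!} \sum_{m\ge 2k} \frac{(-1)^m (2m)!}{4^m \, m! \, 2^{m-2k}(m-2k)!}\, w^m.
\end{equation*}

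Next I would substitute $j = m - 2k$ in the inner sum and simplify the combinatorial factors using $(2n)!/n! = 2^n (2n-1)!!$ together with the Pochhammer factorization $(4k+2j-1)!! = (4k-1)!!\cdot 2^j (2k+\tfrac12)_j$. The inner sum then collapses via the binomial theorem to a closed form,
\begin{equation*}
\sum_{j\ge 0} \frac{(2k+\tfrac12)_j}{j!}\left(-\frac{w}{2}\right)^{j} = \left(1+\frac{w}{2}\right)^{-(2k+1/2)},
\end{equation*}
and combining the remaining factorial constants via $(4k-1)!!/(4k)! = 1/(2^{2k}(2k)!)$ and $\Omega^k/16^k = \Phi^k/4^k$, the expansion takes the form
\begin{equation*}
\theta_3(1+w) = \theta_3(1) \sum_{k\ge 0} \frac{d(k)\,\Phi^k}{4^k (2k)!}\, w^{2k} \left(1+\frac{w}{2}\right)^{-(2k+1/2)}.
\end{equation*}

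Finally I would substitute $w = -2z/(1+z)$, corresponding to $1+w = (1-z)/(1+z)$ as in the definition of $\centeredtheta$. Two clean simplifications appear: $w^{2k} = 4^k z^{2k}/(1+z)^{2k}$ and $1 + w/2 = 1/(1+z)$, so $(1+w/2)^{-(2k+1/2)} = (1+z)^{2k+1/2}$. All factors of $4^k$ and $(1+z)^{\pm 2k}$ then cancel, and together with the overall prefactor $(1+z)^{-1/2}$ from the definition $\centeredtheta(z) = (1+z)^{-1/2}\theta_3((1-z)/(1+z))$ the result telescopes to exactly $\theta_3(1)\sum_k d(k)\Phi^k z^{2k}/(2k)!$, which is \eqref{eq:sigma-taylor}. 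Convergence on $|z|<1$ follows from the analyticity of $\centeredtheta$ there.

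The main obstacle is the resummation in the second step: one has to recognize the hypergeometric form of the inner $j$-sum and handle the double-factorial and Pochhammer manipulations carefully enough to make the constants line up. Once that is done, the cancellations under the subsequent change of variables are essentially forced — indeed, they can be viewed as a formal restatement of the equivalence between the modular transformation \eqref{eq:theta-modular} and the evenness \eqref{eq:sigma-even} of $\centeredtheta$.
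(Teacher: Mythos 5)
Your computation is algebraically correct --- it is, almost line for line, the calculation the paper performs in Section~\ref{sec:proof-theta-taylor}, run in the opposite direction --- but as a proof of Theorem~\ref{thm:sigma-taylor} it is circular within the paper's logical architecture. The paper does not prove Theorem~\ref{thm:theta-taylor} independently and then deduce Theorem~\ref{thm:sigma-taylor} from it; it does the reverse. The expansion \eqref{eq:sigma-taylor} is taken as the \emph{definition} of the sequence $(d(n))_{n=0}^\infty$ (this is legitimate because $\centeredtheta$ is analytic on $|z|<1$ and even by \eqref{eq:sigma-even}, so it has a Taylor expansion in even powers converging on the whole disk, and $d(n)$ is read off as the $2n$-th coefficient normalized by $(2n)!/(\theta_3(1)\Phi^n)$). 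Theorem~\ref{thm:dofn-genfun} is then proved from this definition using Jacobi's identity \eqref{eq:theta-jacobi-hypgeom} and the expansion of $G(t)$ around $t=1/2$; the resulting recurrence (Theorem~\ref{thm:recurrence-dofn}) shows the $d(n)$ are integers; and only then is Theorem~\ref{thm:theta-taylor} derived from \eqref{eq:sigma-taylor}, by exactly the resummation you describe run forwards. So assuming Theorem~\ref{thm:theta-taylor} in order to prove Theorem~\ref{thm:sigma-taylor} assumes a statement whose only proof in the paper rests on the statement you are trying to establish.

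What your argument genuinely proves is the \emph{equivalence} of \eqref{eq:theta-taylor} and \eqref{eq:sigma-taylor} for a given coefficient sequence, and that part checks out: the interchange of summation, the factorization $(4k+2j-1)!!=(4k-1)!!\cdot 2^j\,(2k+\tfrac12)_j$, the binomial resummation to $(1+w/2)^{-(2k+1/2)}$, and the cancellations under $w=-2z/(1+z)$ are all correct (with the caveat that the $w$-series converges only for $|w|<1$, which corresponds to a proper subregion of $|z|<1$, so the extension to the full disk does need the analyticity remark you make at the end). What is missing is everything that makes Theorem~\ref{thm:sigma-taylor} nontrivial on its own: that only even powers of $z$ occur (the content of \eqref{eq:sigma-even}, i.e.\ of the modular relation \eqref{eq:theta-modular}), that the series converges on all of $|z|<1$, and above all that the normalized coefficients $d(n)$ are \emph{integers} --- which in the paper requires the hypergeometric machinery of Sections~\ref{sec:proof-genfun} and~\ref{sec:dofn-recurrence} and cannot be obtained by passing back and forth between the two expansions.
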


In addition to the fact that it clarifies the role of the numbers $d(n)$, the series expansion \eqref{eq:sigma-taylor} also has the advantage that it converges for all $|z|<1$, whereas the Taylor series $\theta_3(x)=\sum_{n=0}^\infty \frac{\theta_3^{(n)}(1)}{n!}(x-1)^n$ around $x=1$ has radius of convergence $1$ and consequently converges only on a small part of the region $\{ \re(x)>0\} $ where $\theta_3(x)$ is defined. Equivalently, converting \eqref{eq:sigma-taylor} back to a formula for $\theta_3(x)$ via the relation 
\begin{equation}
\label{eq:sigma-theta}
\theta_3(x) = \sqrt{\frac{2}{1+x}}\,\centeredtheta\left(\frac{1-x}{1+x}\right)
\end{equation} inverse to \eqref{eq:def-sigma}, we get that $\theta_3(x)$ has the expansion
$$
\theta_3(x) = \theta_3(1) \frac{\sqrt{2}}{\sqrt{1+x}}\sum_{n=0}^\infty \frac{d(n)}{(2n)!} \Phi^n \left(\frac{1-x}{1+x}\right)^{2n} \qquad (\re{x}>0),
$$
which converges throughout the domain of $\theta_3(x)$.

Note as well that the expansion \eqref{eq:sigma-taylor} has the even symmetry \eqref{eq:sigma-even} of $\centeredtheta(z)$ built into it, thus neatly encapsulating the modular transformation property~\eqref{eq:theta-modular}.

\subsection{A generating function identity for the $d(n)$'s}

Having motivated the definition of the sequence of coefficients $(d(n))_{n=0}^\infty$, our main goal is to understand their behavior and derive interesting formulas involving them; as corollaries we will obtain proofs of Theorems~\ref{thm:theta-taylor}~and~\ref{thm:sigma-taylor}, and practical algorithms for computing the $d(n)$'s. The most fundamental identity, which we formulate now, relates the generating function of the sequence $(d(n))_{n=0}^\infty$ to two specializations of the Gauss hypergeometric function $\twoFone$. Recall that $\twoFone$ is defined by
$$
\twoFone(a,b;c;z) = \sum_{n=0}^\infty \frac{1}{n!} \frac{(a)_n (b)_n}{(c)_n} z^n,
$$
where
$$ (x)_n = x(x+1)\ldots (x+n-1) $$
is the Pochhammer symbol.

\begin{thm}[Generating function identity for the numbers $d(n)$]
\label{thm:dofn-genfun}
Define functions $U(t), V(t)$ by
\begin{align}
U(t) &= \frac{\twoFone(\tfrac34,\tfrac34;\tfrac32;4t)}{\twoFone(\tfrac14,\tfrac14;\tfrac12;4t)}, 
\label{eq:def-u}
\\[4pt]
V(t) &= \sqrt{\twoFone(\tfrac14,\tfrac14;\tfrac12;4t)}.
\label{eq:def-v}
\end{align}
Then $(d(n))_{n=0}^\infty$ is the unique sequence of real numbers satisfying the generating function identity
\begin{equation} \label{eq:dofn-genfun}
\sum_{n=0}^\infty \frac{d(n)}{2^n (2n)!} t^n \,U(t)^{2n} = V(t) \qquad (|t|<1).
\end{equation}
\end{thm}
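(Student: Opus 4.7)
My plan is to use Theorem~\ref{thm:sigma-taylor} to reduce the generating function identity, via a change of variables, to a classical theta--hypergeometric identity, which I would then prove by an ODE/symmetry argument.

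\emph{Reduction.} I would introduce the substitution $w := U(t)\sqrt{t/(2\Phi)}$, so that $\Phi w^2 = tU(t)^2/2$ as formal power series in $t$. By Theorem~\ref{thm:sigma-taylor},
\[
\sum_{n=0}^\infty \frac{d(n)}{2^n(2n)!}\, t^n U(t)^{2n} \;=\; \sum_{n=0}^\infty \frac{d(n)}{(2n)!}\,\Phi^n w^{2n} \;=\; \frac{\centeredtheta(w)}{\theta_3(1)},
\]
and using $\centeredtheta(w)^2 = (1+x)\theta_3(x)^2/2$ with $x = (1-w)/(1+w)$, the desired identity \eqref{eq:dofn-genfun} is equivalent to
\[
(1+x)\,\theta_3(x)^2 \;=\; 2\theta_3(1)^2\, \twoFone\bigl(\tfrac14,\tfrac14;\tfrac12;\,4t\bigr).
\]
Matching leading Taylor coefficients at $x=1$ singles out the modular identification $4t = 1 - 4k^2 k'^2 = (k^2 - k'^2)^2$, where $k^2 = \theta_2(x)^4/\theta_3(x)^4$ and $k'^2 = 1-k^2$ are the Jacobi moduli attached to $x$.

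\emph{Proof of the classical identity.} Starting from the Ramanujan--Jacobi formula $\theta_3(x)^2 = \twoFone(1/2,1/2;1;k^2)$ and the Gauss quadratic transformation $\twoFone(1/2,1/2;1;k^2) = \twoFone(1/4,1/4;1;\mu)$ (valid for $k^2<1/2$, where $\mu := 4k^2 k'^2$), I would pull back the $(1/2,1/2;1)$ Picard--Fuchs ODE in $k^2$ along the quadratic map $k^2 \mapsto \mu$, obtaining the $(1/4,1/4;1)$ hypergeometric ODE in $\mu$; the further substitution $\mu \mapsto u := 1-\mu = 4t$ transforms this (by a direct computation) into the $(1/4,1/4;1/2)$ hypergeometric ODE in $u$. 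The function $(1+x)\theta_3(x)^2/2$ is invariant under $x\mapsto 1/x$ (by $\theta_3(1/x)^2 = x\theta_3(x)^2$), hence descends to a single-valued function of $u$ that is analytic at $u=0$ (i.e.\ at $x=1$); being a sum of two solutions ($\theta_3^2$ and $x\theta_3^2$) of the pulled-back ODE in $x$, it is itself a solution. Uniqueness of the analytic-at-zero solution of the $(1/4,1/4;1/2;u)$ ODE then forces $(1+x)\theta_3(x)^2/2 = c\cdot \twoFone(1/4,1/4;1/2;u)$, and evaluating at $x=1$ gives $c = \theta_3(1)^2$, proving the identity.

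\emph{Consistency and conclusion.} An analogous argument applied to the anti-invariant piece $(x-1)\theta_3(x)^2/2$---which matches the square-root-branched second local solution $u^{1/2}\twoFone(3/4,3/4;3/2;u)$ near $u=0$---yields the companion identity $(x-1)\theta_3(x)^2 = \tfrac{4\sqrt{t}}{\pi\theta_3(1)^2}\twoFone(3/4,3/4;3/2;4t)$, with the constant determined by matching first derivatives at $x=1$ via the classical formula $(k^2)'(1) = -\pi\theta_3(1)^4/4$. Dividing the companion by the main identity gives $U(t) = \pi(x-1)\theta_3(1)^4/[2\sqrt{t}(1+x)]$, and using the evaluation $\Phi = \pi^2\theta_3(1)^8/8$ (a consequence of $\theta_3(1) = \Gamma(1/4)/(\sqrt{2}\pi^{3/4})$ together with $\Phi = \Omega/4$), a direct calculation verifies $tU(t)^2/2 = \Phi(x-1)^2/(1+x)^2 = \Phi z^2$, confirming that the formal substitution agrees with $w = z = (1-x)/(1+x)$ and closing the loop. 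Uniqueness of $(d(n))_{n=0}^\infty$ is immediate: the coefficient of $t^n$ on the left-hand side of \eqref{eq:dofn-genfun} equals $d(n)/(2^n(2n)!)$ plus a linear combination of $d(0),\ldots,d(n-1)$, giving a solvable forward recursion. The hardest step, where I would expect subtleties to arise, is verifying that the Gauss quadratic transformation descends cleanly to ODEs and that the symmetric/anti-symmetric pieces of $\theta_3(x)^2$ under $x\mapsto 1/x$ really match the two local solutions at $u=0$ with the claimed constants---the careful sign-handling of $\sqrt{1-\mu} = |k^2-k'^2|$ across $x=1$ being the place where one must tread most carefully.
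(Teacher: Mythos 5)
Your proposal is correct in substance and arrives at the same core identity as the paper, but by a noticeably different route to the key lemma, so a comparison is worth making. Both arguments ultimately rest on Jacobi's relation $\theta_3(x)^2=\twoFone(\tfrac12,\tfrac12;1;k^2)$ with $x=K'/K$ (the paper uses it in the form \eqref{eq:theta-jacobi-hypgeom}), and on the fact that the symmetric and antisymmetric parts of $G(k^2)$ under $k^2\mapsto k'^2$ --- which are exactly $(1+x)\theta_3(x)^2$ and $(1-x)\theta_3(x)^2$ --- equal, up to explicit constants, $\twoFone(\tfrac14,\tfrac14;\tfrac12;\,\cdot\,)$ and $(k^2-\tfrac12)\,\twoFone(\tfrac34,\tfrac34;\tfrac32;\,\cdot\,)$ evaluated at $(k^2-k'^2)^2$. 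The paper establishes this by writing down the explicit Taylor expansion \eqref{eq:goft-taylor} of $G$ at $k^2=1/2$, checking that it satisfies the hypergeometric ODE \eqref{eq:twofone-ode}, and matching $G(1/2)$ and $G'(1/2)$; you instead invoke the Gauss quadratic transformation to pass to $\twoFone(\tfrac14,\tfrac14;1;4k^2k'^2)$ and then use the local exponents $\{0,\tfrac12\}$ at $\mu=1$ together with uniqueness of the analytic local solution. These are two proofs of the same lemma, both of the shape ``second-order ODE plus two pieces of initial data at $k^2=1/2$,'' and your initial data ($\theta_3(1)$ and $(k^2)'(1)$, whose value I checked agrees with $-\Gamma(\tfrac14)^4/16\pi^2$) is equivalent to the paper's $G(1/2)$, $G'(1/2)$ obtained from Legendre's relation. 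Your version explains conceptually why the parameters $(\tfrac14,\tfrac14;\tfrac12)$ and the variable $(k^2-k'^2)^2$ appear, at the cost of the branch bookkeeping for $\sqrt{1-\mu}$ across $k^2=1/2$, which you rightly flag. One caveat of presentation rather than substance: your ``reduction'' sets $x=(1-w)/(1+w)$ with $w=U(t)\sqrt{t/(2\Phi)}$ before this $x$ has been identified with the modular parameter, so as written the argument is circular until the closing computation $tU(t)^2/2=\Phi z^2$. The fix is simply to reorder: first prove the two classical identities for the modular $x$, deduce $tU(t)^2/2=\Phi z^2$ with $z=(1-x)/(1+x)$, and only then substitute into \eqref{eq:sigma-taylor} to obtain \eqref{eq:dofn-genfun} --- which is essentially the order followed in Section~\ref{sec:proof-genfun}. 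Your uniqueness observation (the coefficient of $t^n$ involves $d(n)$ with coefficient $1/(2^n(2n)!)$ plus earlier $d(k)$'s) is correct and is the same triangularity that underlies the recurrence of Theorem~\ref{thm:recurrence-dofn}.
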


\subsection{Taylor coefficients of modular forms and connections to previous work}

\label{subsec:previouswork}

The results of this paper can be viewed as part of the much broader theme of considering the Taylor coefficients of general modular forms around complex multiplication points. This point of view seems to have its roots in the work of Shimura \cite{shimura}, and was later considered by Villegas and Zagier \cite{villegas-zagier1, villegas-zagier2} and others \cite{datsk, larson-smith, voight-willis}; see also Sections 5.1~and~6.3~of~\cite{zagier123}. Our idea of considering the ``centered'' version of a modular form, and the connection between the Taylor expansions of the centered and non-centered version, are discussed in \cite{zagier123} in that broader setting.

While our point of view has some overlap with the existing literature and therefore in some sense our results can be viewed as not much more than a special case of a more general theory, the emphasis in the current work is on considering the sequence $(d(n))_{n=0}^\infty$ as an interesting sequence of integers and studying its properties, deriving explicit algorithms for its computation, etc. Moreover, our impression is that in the existing literature scant attention has been paid to the fact that sequences of Taylor coefficients of modular forms can give rise to sequences of integers (as opposed to algebraic numbers) that are worth studying for their own sake. We believe that this point of view deserves to be emphasized and further explored systematically; see also the discussion in the section on open problems (Section~\ref{sec:openproblems} below).

\subsection{Structure of the paper}

The rest of the paper is organized as follows. In Section~\ref{sec:proof-genfun} we prove Theorem~\ref{thm:dofn-genfun}, starting from \eqref{eq:sigma-taylor}, which we take as our definition of $d(n)$. In Section~\ref{sec:dofn-recurrence} we will show how Theorem~\ref{thm:dofn-genfun} can be reformulated into a recurrence relation for $d(n)$. In Section~\ref{sec:proof-theta-taylor} we prove Theorem~\ref{thm:theta-taylor}. In Section~\ref{sec:infsums} we recast some of the results as explicit formulas for some infinite series. In Section~\ref{sec:sigma-ode} we show that the centered function $\centeredtheta(z)$ satisfies a nonlinear third-order differential equation that is very similar to the one satisfied by $\theta_3(x)$. In Section~\ref{sec:congruences} we discuss some conjectural congruences satisfied by the sequence of coefficients $((d(n))_{n=0}^\infty$. Section~\ref{sec:openproblems} lists some open problems.

\subsection*{Acknowledgements}

The author thanks Robert Scherer, Christian Krattenthaler, Tanguy Rivoal, David Broadhurst, Peter Paule, Yiangjie Ye, Doron Zeilberger, Craig Tracy, David Bailey and Bill Gosper for helpful discussions during the preparation of this manuscript. Some of these discussions took place during the author's visit to the Erwin Schr\"odinger Institute (ESI) in November 2017; the author is grateful to ESI for its support and hospitality. The author also thanks the anonymous referee for suggesting useful corrections.

This material is based upon work supported by the National Science Foundation under Grant No.~DMS-1800725.

\section{Proof of Theorem~\ref{thm:dofn-genfun} assuming Theorem~\ref{thm:sigma-taylor}}

\label{sec:proof-genfun}

For the remainder of the paper, we take the Taylor expansion \eqref{eq:sigma-taylor} as the definition of the sequence $(d(n))_{n=0}^\infty$. Our goal in this section is to prove the generating function identity \eqref{eq:dofn-genfun}. The main tool we will use is a well-known identity of Jacobi that relates $\theta_3(x)$ to the complete elliptic integral of the first kind. Specifically, define
\begin{equation} \label{eq:goft-def}
G(t) = \twoFone(\tfrac12,\tfrac12;1;t) = \sum_{n=0}^\infty \binom{2n}{n}^2 \left(\frac{t}{16}\right)^n,
\end{equation}
which can also be written as $G(t) = \frac{2}{\pi} K(\sqrt{t})$,
where
$$ K(k) = \int_0^1 \frac{dx}{\sqrt{(1-x^2)(1-k^2 x^2)}} $$
is the complete elliptic integral of the first kind. Jacobi's identity states \cite[p.~101]{ram3} that
\begin{equation} \label{eq:theta-jacobi-hypgeom}
\theta_3\left( \frac{G(1-t)}{G(t)} \right) = \sqrt{G(t)} \qquad (0<t<1)
\end{equation}
(see also \cite[pp.~479, 499]{whittaker-watson}).
Without doing any explicit computations, one can see immediately from \eqref{eq:theta-jacobi-hypgeom} that the local behavior of $\centeredtheta(z)$ near $z=0$, which (as we noted in the introduction) is equivalent through the change of variables $z=\frac{1-x}{1+x}$ to the local behavior of $\theta_3(x)$ near $x=1$, also becomes equivalent through the further change of variables $x=G(1-t)/G(t)$ to the local behavior of $\theta_3(x)=\sqrt{G(t)}$ near $t=1/2$. Thus, it seems plausible that~\eqref{eq:theta-jacobi-hypgeom} could be used to obtain information about the Taylor expansion of $\centeredtheta(z)$ near $z=0$. However, this requires understanding the Taylor expansion of $G(t)$ around $t=1/2$ (where note that it is defined as a Taylor series around $t=0$). The next result bridges that gap.

\begin{prop} The Taylor expansion of $G(t)$ around $t=1/2$ is given by
\begin{align} 
\nonumber G(t) &= \frac{1}{2\pi^{3/2}} \sum_{n=0}^\infty \frac{4^n \,\Gamma\left(\frac{n}{2}+\tfrac14\right)^2}{n!} \left(t-1/2\right)^n
\\ &= 
\frac{1}{2\pi^{3/2}}
\left( \Gamma\left(\tfrac14\right)^2 Q(t-1/2)+ 
4 \Gamma\left(\tfrac34\right)^2 P(t-1/2) \right),
\label{eq:goft-taylor}
\end{align}
where we define
\begin{align*} 
P(u) &= \sum_{m=0}^\infty \frac{(3\cdot 7\cdot 11\cdot \ldots \cdot (4m-1))^2}{(2m+1)!} u^{2m+1} = u\cdot \twoFone(\tfrac34,\tfrac34;\tfrac32;4u^2), \\
Q(u) &= \sum_{m=0}^\infty \frac{(1\cdot 5\cdot 9\cdot \ldots \cdot (4m-3))^2}{(2m)!} u^{2m} = \twoFone(\tfrac14,\tfrac14;\tfrac12;4u^2).
\end{align*}

\end{prop}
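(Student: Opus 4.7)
The plan is to derive the expansion by combining Jacobi's quadratic hypergeometric transformation with the standard connection formula for $\twoFone$ at its singular point $z=1$, thereby reducing the local behavior of $G(t)$ at $t=1/2$ to behavior at the boundary of the disk of convergence of $\twoFone$.

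The first step is to invoke the classical quadratic transformation
$$G(t) = \twoFone(\tfrac12,\tfrac12;1;t) = \twoFone(\tfrac14,\tfrac14;1;4t(1-t)) \qquad (0\le t\le 1/2).$$
Setting $u=t-1/2$ so that $4t(1-t) = 1 - 4u^2$, this becomes $G(1/2+u) = \twoFone(\tfrac14,\tfrac14;1;1-4u^2)$, so the Taylor expansion of $G$ at $t=1/2$ is equivalent to the expansion of $\twoFone(\tfrac14,\tfrac14;1;1-v)$ around $v=0$.

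The second step is to apply the standard connection formula for $\twoFone(a,b;c;z)$ at $z=1$, which is valid here because $c-a-b=1/2$ is not an integer; specialized to $a=b=\tfrac14$, $c=1$ it reads
$$\twoFone(\tfrac14,\tfrac14;1;1-v) = \frac{\sqrt{\pi}}{\Gamma(\tfrac34)^2}\,\twoFone(\tfrac14,\tfrac14;\tfrac12;v) - \frac{2\sqrt{\pi}}{\Gamma(\tfrac14)^2}\,v^{1/2}\,\twoFone(\tfrac34,\tfrac34;\tfrac32;v).$$
Substituting $v=4u^2$ with the branch $v^{1/2}=-2u$ (forced by the regime $u\in[-1/2,0]$ in which the quadratic transformation applies) and recognizing the two hypergeometric series on the right-hand side as $Q(u)$ and $P(u)/u$ per the definitions in the statement, I would simplify the Gamma prefactors using the reflection identity $\Gamma(\tfrac14)\Gamma(\tfrac34)=\pi\sqrt{2}$ to arrive at
$$G(1/2+u) = \frac{1}{2\pi^{3/2}}\bigl(\Gamma(\tfrac14)^2 Q(u) + 4\,\Gamma(\tfrac34)^2 P(u)\bigr),$$
which is the second displayed form of the proposition. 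The single-series form then follows by splitting the sum over even and odd indices: expanding $\Gamma(m+\tfrac14)=\Gamma(\tfrac14)(\tfrac14)_m$ and $\Gamma(m+\tfrac34)=\Gamma(\tfrac34)(\tfrac34)_m$ and using the duplication identities $(2m)!=4^m m!(\tfrac12)_m$ and $(2m+1)!=4^m m!(\tfrac32)_m$ identifies the even-$n$ terms of $\sum_n \tfrac{4^n \Gamma(n/2+1/4)^2}{n!}u^n$ with $\Gamma(\tfrac14)^2 Q(u)$ and the odd-$n$ terms with $4\,\Gamma(\tfrac34)^2 P(u)$.

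The main subtle point is justifying the choice of branch $v^{1/2}=-2u$: the quadratic transformation in the first step holds only on the half-interval $t\in[0,1/2]$, which pins down the sign, after which the resulting identity extends to a complex neighborhood of $u=0$ by analytic continuation. A less slick but more direct alternative, which avoids the branch question entirely, is to rewrite the hypergeometric ODE $t(1-t)G''+(1-2t)G'-(1/4)G=0$ in the variable $u=t-1/2$, extract the two-term recurrence $(n+2)(n+1)a_{n+2}=(2n+1)^2 a_n$ for the Taylor coefficients $a_n=G^{(n)}(1/2)/n!$, verify that the proposed coefficients $\tfrac{1}{2\pi^{3/2}}\cdot\tfrac{4^n \Gamma(n/2+1/4)^2}{n!}$ satisfy it, and check the initial values $a_0 = \Gamma(\tfrac14)^2/(2\pi^{3/2})$ (from the classical evaluation $K(1/\sqrt{2})=\Gamma(\tfrac14)^2/(4\sqrt{\pi})$) and $a_1 = 2\,\Gamma(\tfrac34)^2/\pi^{3/2}$ (from Legendre's relation $2EK-K^2=\pi/2$ at $k=1/\sqrt{2}$).
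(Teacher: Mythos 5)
Your primary argument is correct, and it takes a genuinely different route from the paper. The paper proves the proposition by observing that both $G(t)$ and the candidate series $\gamma(t)=\frac{1}{2\pi^{3/2}}\sum_n \frac{4^n\Gamma(n/2+1/4)^2}{n!}(t-1/2)^n$ satisfy the same second-order linear ODE $t(1-t)g''+(1-2t)g'-\frac14 g=0$, and then matching the two initial conditions $G(1/2)=\Gamma(\tfrac14)^2/(2\pi^{3/2})$ and $G'(1/2)=2\Gamma(\tfrac34)^2/\pi^{3/2}$ — i.e., essentially the ``less slick alternative'' you sketch in your last paragraph, except that the paper verifies the ODE for the closed-form series directly rather than extracting the recurrence $(n+2)(n+1)a_{n+2}=(2n+1)^2a_n$. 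Your main route instead derives the expansion from standard hypergeometric machinery: the quadratic transformation $\twoFone(\tfrac12,\tfrac12;1;t)=\twoFone(\tfrac14,\tfrac14;1;4t(1-t))$ followed by the connection formula at $z=1$ (legitimate since $c-a-b=\tfrac12\notin\Z$), with the branch $v^{1/2}=-2u$ correctly pinned down by the restriction $t\in[0,\tfrac12]$ and the identity then extended by analytic continuation; I checked the Gamma-factor bookkeeping ($\sqrt{\pi}/\Gamma(\tfrac34)^2=\Gamma(\tfrac14)^2/(2\pi^{3/2})$ and $4\sqrt{\pi}/\Gamma(\tfrac14)^2=2\Gamma(\tfrac34)^2/\pi^{3/2}$ via $\Gamma(\tfrac14)\Gamma(\tfrac34)=\sqrt2\,\pi$) and the even/odd splitting via $(2m)!=4^m m!(\tfrac12)_m$, $(2m+1)!=4^m m!(\tfrac32)_m$, and both are right. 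The trade-off: the paper's argument is shorter and needs only the two classical special values (the second obtained from Legendre's relation), whereas yours requires the connection formula and care with the square-root branch, but in exchange it \emph{explains} where the two series $\twoFone(\tfrac14,\tfrac14;\tfrac12;\cdot)$ and $\twoFone(\tfrac34,\tfrac34;\tfrac32;\cdot)$ come from — they are the two local solutions of the hypergeometric equation at its singular point — and it recovers the special values of $G$ and $G'$ at $t=1/2$ as a byproduct rather than consuming them as inputs.
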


\begin{proof}
The second equality in \eqref{eq:goft-taylor} is immediate upon separating the infinite sum on the first row into its odd and even (as functions of the variable $s=t-1/2$) components. It is therefore enough to show that 
$G(t)$ coincides with the function 
$$\gamma(t):= \frac{1}{2\pi^{3/2}} \sum_{n=0}^\infty \frac{4^n}{n!} \Gamma\left(\frac{n}{2}+\tfrac14\right)^2\left(t-1/2\right)^n.
$$ 
But recall that $G(t)=\twoFone(\tfrac12,\tfrac12;1;t)$, being a specialization of a Gauss $\twoFone(a,b;c;t)$ function, satisfies the hypergeometric differential equation \cite{bailey}
\begin{equation} \label{eq:twofone-ode}
t(1-t) g''(t) + (1-2t)g'(t) - \tfrac14 g(t) = 0.
\end{equation}
It is likewise easy to verify directly from its definition that $\gamma(t)$ is also a solution to \eqref{eq:twofone-ode}. Since $G(t)$ and $\gamma(t)$ both satisfy the same second-order linear ordinary differential equation, it suffices to prove that $G(1/2) = \gamma(1/2)$ and $G'(1/2)=\gamma'(1/2)$. Now, the definition of $\gamma(t)$ gives that 
\begin{align*}
\gamma(1/2)&=\frac{1}{2\pi^{3/2}}\Gamma\left(\tfrac14\right)^2 Q(0)=\frac{1}{2\pi^{3/2}}\Gamma\left(\tfrac14\right)^2, \\
\gamma'(1/2)&=\frac{1}{2\pi^{3/2}}\cdot 4\Gamma\left(\tfrac34\right)^2 P'(0)=\frac{2}{\pi^{3/2}}\cdot \Gamma\left(\tfrac34\right)^2.
\end{align*} 
These numbers indeed coincide with $G(1/2)$ and $G'(1/2)$, respectively, since the value $G(1/2)=\frac{1}{2\pi^{3/2}}\Gamma\left(\tfrac14\right)^2$ is classically known, \cite[p.~11]{bailey} and the evaluation $G'(1/2)=\frac{2}{\pi^{3/2}}\Gamma\left(\tfrac34\right)^2$ is similarly easy to obtain using standard identities (specifically, the differential equation $\frac{dK(k)}{dk} = \frac{E(k)}{k(1-k^2)}-\frac{K(k)}{k}$ and Legendre's relation 
$ K(k) E(k')+E(k)K(k')-K(k)K(k')=\pi/2$, where $k'=\sqrt{1-k^2}$, and $E(k)$ denotes the complete elliptic integral of the second kind).
\end{proof}

We are now ready to prove Theorem~\ref{thm:dofn-genfun}. Let $t$ and $x$ be related by $x=G(1-t)/G(t)$. Then making use of \eqref{eq:sigma-taylor}, \eqref{eq:sigma-theta} and \eqref{eq:theta-jacobi-hypgeom} we have that
\begin{align*}
\sqrt{G(t)} &= \theta_3\left(\frac{G(1-t)}{G(t)}\right) = \theta_3(x) = \frac{\sqrt{2}}{\sqrt{1+x}} \centeredtheta\left(\frac{1-x}{1+x}\right) \\ &= \sqrt{2} \frac{\sqrt{G(t)}}{\sqrt{G(t)+G(1-t)}} \centeredtheta\left( \frac{G(t)-G(1-t)}{G(t)+G(1-t)} \right)
\\ &= \sqrt{2} 
\frac{\sqrt{G(t)}}{\sqrt{G(t)+G(1-t)}} 
\cdot \theta_3(1) 
\sum_{n=0}^\infty \frac{d(n)}{(2n)!} \Phi^n \left( \frac{G(t)-G(1-t)}{G(t)+G(1-t)} \right)^{2n},
\end{align*}
or, equivalently,
\begin{equation} \label{eq:sqrt-gt-goneminus}
\sqrt{G(t)+G(1-t)}
=
\sqrt{2} 
\cdot \theta_3(1) 
\sum_{n=0}^\infty \frac{d(n)}{(2n)!} \Phi^n \left( \frac{G(t)-G(1-t)}{G(t)+G(1-t)} \right)^{2n}.
\end{equation}
But now note that from \eqref{eq:goft-taylor}, we have that
\begin{align*}
G(t)-G(1-t) &=
\frac{4}{\pi^{3/2}}\Gamma\left(\tfrac34\right)^2 P(t-1/2)
\\ &= 
\frac{4}{\pi^{3/2}} \Gamma\left(\tfrac34\right)^2 (t-1/2)\cdot \twoFone(\tfrac34,\tfrac34;\tfrac32; 4(t-1/2)^2 ),
\\
G(t) + G(1-t) &=
\frac{1}{\pi^{3/2}}\Gamma\left(\tfrac14\right)^2 Q(t-1/2)
\\ &= 
\frac{1}{\pi^{3/2}}  \Gamma\left(\tfrac14\right)^2\, \twoFone(\tfrac14,\tfrac14;\tfrac12; 4(t-1/2)^2 ),
\end{align*}
so that, denoting $s=(t-1/2)^2$, we can rewrite \eqref{eq:sqrt-gt-goneminus} as
\begin{align*}
\frac{1}{\pi^{3/4}} &\Gamma\left(\tfrac14\right) \sqrt{
\twoFone(\tfrac14,\tfrac14;\tfrac12; 4s)}
\\&= \sqrt{2}\,\theta_3(1)\sum_{n=0}^\infty \frac{d(n)}{(2n)!} \Phi^n 
\left(\frac{4\Gamma\left(\tfrac34\right)^2}{\Gamma\left(\tfrac14\right)^2} \right)^{2n} s^n \left(
\frac{\twoFone(\tfrac34,\tfrac34;\tfrac32; 4s)}{\twoFone(\tfrac14,\tfrac14;\tfrac12; 4s)}
\right)^{2n}.
\end{align*}
Equivalently, referring to \eqref{eq:def-u}--\eqref{eq:def-v}, we have that
$$ V(s) = C_1 \sum_{n=0}^\infty \frac{d(n)}{2^n (2n)!} C_2^n s^n U(s)^{2n}, $$
where we define constants
$$
C_1 = \frac{\sqrt{2} \,\pi^{3/4} \,\theta_3(1)}{\Gamma\left(\tfrac14\right)}, 
\qquad 
C_2 = 2 \Phi  \left(\frac{4\Gamma\left(\tfrac34\right)^2}{\Gamma\left(\tfrac14\right)^2} \right)^2.
$$
Now, referring to \eqref{eq:theta-eval} we see that $C_1=1$. Similarly, recalling the standard relation $\Gamma\left(\tfrac14\right)\Gamma\left(\tfrac34\right)=\sqrt{2}\pi$,
one can easily check that $C_2=1$.
This finishes the proof of \eqref{eq:dofn-genfun}.
\qed

\section{A recurrence relation for $d(n)$}

\label{sec:dofn-recurrence}

We now reformulate the generating function identity \eqref{eq:dofn-genfun} as an explicit recurrence relation for $d(n)$.

\begin{lem}
The Taylor expansion of $U(t)$ (defined in \eqref{eq:def-u}) around $t=0$ is given by
$$
U(t) = \sum_{n=0}^\infty \frac{u(n)}{(2n+1)!} t^n,
$$
where $(u(n))_{n=0}^\infty = 1,6,256,28560,\ldots$ is a sequence of integers which satisfy for $n\ge1$ the recurrence relation
\begin{equation} \label{eq:uofn-recurrence}
u(n) = \Big(3\cdot 7\cdot 11\cdot \ldots \cdot (4n-1)\Big)^2 - \sum_{m=0}^{n-1} \binom{2n+1}{2m+1}\Big(1\cdot 5\cdot 9\cdot \ldots \cdot (4(n-m)-3)\Big)^2 u(m).
\end{equation}
\end{lem}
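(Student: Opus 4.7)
The plan is to cross-multiply $U(t) = N(t)/D(t)$, where $N(t)$ and $D(t)$ are the hypergeometric numerator and denominator in \eqref{eq:def-u}, and equate Taylor coefficients of $t^n$; the claimed recurrence will fall out after a routine binomial identity. The only delicate point is bookkeeping with factorials and empty products.

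First I would read off the explicit Taylor expansions of the numerator and denominator by substituting $u^2 = t$ in the formulas for $P(u)/u$ and $Q(u)$ from the preceding proposition, obtaining
\begin{align*}
N(t) &:= \twoFone(\tfrac34,\tfrac34;\tfrac32;4t) = \sum_{n=0}^\infty \frac{(3\cdot 7\cdots (4n-1))^2}{(2n+1)!}\, t^n,\\
D(t) &:= \twoFone(\tfrac14,\tfrac14;\tfrac12;4t) = \sum_{n=0}^\infty \frac{(1\cdot 5\cdots (4n-3))^2}{(2n)!}\, t^n,
\end{align*}
with empty product conventions for the $n=0$ terms. Since $D(0)=1\ne 0$, the quotient $U(t)=N(t)/D(t)$ has a Taylor expansion at $t=0$; define numbers $u(n)$ by writing it in the form $U(t)=\sum_{n\ge 0}\frac{u(n)}{(2n+1)!}t^n$. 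The base value $U(0)=1$ gives $u(0)=1$, which pins down the normalization.

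Next I would form the identity $U(t)\,D(t)=N(t)$ and extract the coefficient of $t^n$ via the Cauchy product; after clearing the denominators by multiplying through by $(2n+1)!$, the prefactor that appears is $\frac{(2n+1)!}{(2(n-m)+1)!\,(2m)!}=\binom{2n+1}{2m}$. Reindexing $m \mapsto n-m$ and using $\binom{2n+1}{2(n-m)}=\binom{2n+1}{2m+1}$ puts the identity into the form
\begin{equation*}
\sum_{m=0}^{n}\binom{2n+1}{2m+1}\bigl(1\cdot 5\cdots(4(n-m)-3)\bigr)^2 u(m) = \bigl(3\cdot 7\cdots(4n-1)\bigr)^2.
\end{equation*}
Isolating the $m=n$ term, whose coefficient is $\binom{2n+1}{2n+1}=1$ and whose product factor is empty, gives $u(n)$ on the left, leaving precisely the recurrence \eqref{eq:uofn-recurrence}.

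Finally, integrality of the $u(n)$ follows by a trivial strong induction: the base case $u(0)=1$ is an integer, and the recurrence expresses $u(n)$ as an integer linear combination of $u(0),\dots,u(n-1)$. The main obstacle, such as it is, is simply to keep the normalizations and index shifts straight — there is no real analytic difficulty, since the convergence of $N(t)$ and $D(t)$ on $|t|<1/4$ (inherited from the $\twoFone$ series) guarantees that $U(t)$ has a positive radius of convergence around $0$.
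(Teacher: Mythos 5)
Your proposal is correct and follows exactly the paper's approach: the paper likewise rewrites \eqref{eq:def-u} as $U(t)\,\twoFone(\tfrac14,\tfrac14;\tfrac12;4t)=\twoFone(\tfrac34,\tfrac34;\tfrac32;4t)$, equates Taylor coefficients, and deduces integrality from the recurrence by induction. You have simply spelled out the coefficient identifications and binomial bookkeeping that the paper leaves implicit, and these check out.
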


\begin{proof}
The relation \eqref{eq:uofn-recurrence} is immediate from the definition of $u(n)$ upon rewriting \eqref{eq:def-u} in the form 
\begin{equation}
\label{eq:uofn-taylor}
U(t) \twoFone(\tfrac14,\tfrac14;\tfrac12;4t)
= \twoFone(\tfrac34,\tfrac34;\tfrac32;4t)
\end{equation}
and equating the Taylor coefficients on both sides.
The recurrence then implies that $u(n)$ is an integer.
\end{proof}

\begin{lem}
The Taylor expansion of $V(t)$ around $t=0$ is given by
\begin{equation} \label{eq:vofn-taylor}
V(t) = \sum_{n=0}^\infty \frac{v(n)}{2^n (2n)!} t^n,
\end{equation}
where $(v(n))_{n=0}^\infty = 1,1,47,7395,2453425,\ldots$ are integers which satisfy for $n\ge1$ the recurrence relation
\begin{equation} \label{eq:vofn-recurrence}
v(n) = 2^{n-1}\Big(1\cdot 5\cdot 9\cdot \ldots \cdot (4n-3)\Big)^2 - \frac12\sum_{m=1}^{n-1} \binom{2n}{2m}
v(m)v(n-m).
\end{equation}
\end{lem}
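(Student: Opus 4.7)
The plan is to derive the recurrence by squaring $V(t)=\sqrt{\twoFone(\tfrac14,\tfrac14;\tfrac12;4t)}$ and matching Taylor coefficients at $t=0$, and then to establish integrality by induction.

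First, from $V(0)=1$ one reads off $v(0)=1$, which fixes the normalization. Squaring gives $V(t)^2=\twoFone(\tfrac14,\tfrac14;\tfrac12;4t)$. Using the standard Pochhammer identities $(\tfrac14)_n=(1\cdot 5\cdots(4n-3))/4^n$ and $(\tfrac12)_n=(2n)!/(4^n n!)$, the coefficient of $t^n$ in the right-hand side simplifies to $(1\cdot 5\cdots(4n-3))^2/(2n)!$, in agreement with the expansion of $Q(u)$ in the previous proposition under the substitution $t=u^2$. Meanwhile, the Cauchy product applied to \eqref{eq:vofn-taylor} yields
$$
V(t)^2=\sum_{n=0}^\infty \frac{t^n}{2^n(2n)!}\sum_{m=0}^n \binom{2n}{2m} v(m)v(n-m).
$$
Matching coefficients and isolating the two boundary terms $m=0$ and $m=n$, which together contribute $2v(n)$, produces the recurrence \eqref{eq:vofn-recurrence}.

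The remaining and more delicate step is integrality. I would induct on $n$, assuming $v(0),\ldots,v(n-1)\in\Z$. The leading contribution $2^{n-1}(1\cdot 5\cdots(4n-3))^2$ is plainly an integer, so it suffices to show that the inner sum $S=\sum_{m=1}^{n-1}\binom{2n}{2m}v(m)v(n-m)$ is even. The symmetry $\binom{2n}{2m}=\binom{2n}{2(n-m)}$ pairs the $m$ and $n-m$ terms for $m\ne n/2$, giving an even subtotal. When $n$ is even, the unpaired middle term $\binom{2n}{n}v(n/2)^2$ is also even, because $\binom{2n}{n}=2\binom{2n-1}{n-1}$. Hence $S$ is always even and $v(n)\in\Z$.

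The main obstacle is mild: the Cauchy-product derivation is essentially mechanical, and the only step requiring any real care is the parity argument for integrality, which hinges on the well-known divisibility $2\mid \binom{2n}{n}$ (for $n\ge 1$).
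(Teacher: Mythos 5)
Your proof is correct and follows essentially the same route as the paper: square $V(t)$, match Taylor coefficients via the Cauchy product to extract the recurrence, and then prove integrality by pairing the $m$ and $n-m$ terms and invoking the evenness of $\binom{2n}{n}$ for the unpaired middle term. This is exactly the symmetry argument the paper uses (its displayed rewriting \eqref{eq:vofn-recurrence2}), so there is nothing to add.
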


\begin{proof}
To prove \eqref{eq:vofn-recurrence}, rewrite \eqref{eq:def-u} in the form 
$$ V(t)^2 = \twoFone(\tfrac14,\tfrac14;\tfrac12;4t) $$
and equate coefficients. It is also not difficult to see that $v(n)$ is an integer, by exploiting symmetry to rewrite \eqref{eq:vofn-recurrence} in the form
\begin{align} 
\nonumber
v(n) &= 2^{n-1}\Big(1\cdot 5\cdot 9\cdot \ldots \cdot (4n-3)\Big)^2 - \sum_{1\le m <\lfloor \frac{n}{2}\rfloor }\binom{2n}{2m}v(m)v(n-m)
\\ & \qquad - \frac12 \chi_{\{n\textrm{ even}\}} \binom{2n}{n} v(n/2)^2 
\label{eq:vofn-recurrence2}
\end{align}
(where $\chi_{\{n\textrm{ even}\}}=1$ if $n$ is even, $0$ otherwise)
and noting that $\binom{2n}{n}$ is always an even integer.
\end{proof}

\begin{thm}[Recurrence relation for $d(n)$]

\label{thm:recurrence-dofn}

The numbers $(d(n))_{n=0}^\infty$ satisfy the recurrence relation
\begin{equation}
\label{eq:dofn-recurrence}
d(n) = v(n) - \sum_{k=1}^{n-1} r(n,k) d(k) \qquad (n\ge1),
\end{equation}
where $(r(n,k))_{1\le k\le n}$ is a triangular array of integers defined by
\begin{equation}
\label{eq:rofnk-def}
r(n,k) = 2^{n-k} \frac{(2n)!}{(2k)!} [t^{n-k}]U(t)^{2k} \qquad (1\le k\le n).
\end{equation}
(Here, $[t^j]f(t)$ refers as usual to the $j$th coefficient $c_j$ in a power series $f(t)=\sum_{m=0}^\infty c_m t^m$.)
\end{thm}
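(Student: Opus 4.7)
The plan is to extract the coefficient of $t^n$ from both sides of the generating-function identity \eqref{eq:dofn-genfun} to derive the recurrence, and then separately verify the integrality of the coefficients $r(n,k)$.

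First I would use \eqref{eq:vofn-taylor} on the right side, so that $[t^n]V(t)=v(n)/(2^n(2n)!)$. On the left, $u(0)=1$ forces $U(0)=1$, hence the $k$th summand $\tfrac{d(k)}{2^k(2k)!}\,t^k U(t)^{2k}$ begins at order $t^k$ and only the indices $k=0,1,\ldots,n$ contribute to $[t^n]$. The $k=n$ term equals $d(n)/(2^n(2n)!)$ because $[t^0]U(t)^{2n}=1$, while for $n\ge 1$ the $k=0$ term vanishes since $U(t)^0\equiv 1$. Matching the two sides and multiplying through by $2^n(2n)!$ then isolates $d(n)$ and yields \eqref{eq:dofn-recurrence}, with $r(n,k)$ appearing exactly as defined in \eqref{eq:rofnk-def}.

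The more substantive task is to show $r(n,k)\in\Z$. My approach is to repackage $U$ as an exponential generating function via the odd function
$$
F(s) \;=\; s\,U(s^2) \;=\; \sum_{n=0}^\infty \frac{u(n)}{(2n+1)!}\,s^{2n+1}.
$$
Its Maclaurin coefficients $a_m:=m!\,[s^m]F(s)$ are all integers (zero for even $m$; equal to $u((m-1)/2)\in\Z$ for odd $m$, by the previous lemma). Since $F(s)^{2k}=s^{2k}U(s^2)^{2k}$, we have $[t^{n-k}]U(t)^{2k}=[s^{2n}]F(s)^{2k}$, and therefore
$$
r(n,k) \;=\; 2^{n-k}\,\frac{(2n)!}{(2k)!}\,[s^{2n}]F(s)^{2k}.
$$
Expanding $F^{2k}$ and regrouping by unordered multisets $\{m_1,\ldots,m_{2k}\}$ of odd positive integers summing to $2n$, this rewrites as a sum of terms
$$
\frac{(2n)!}{\prod_i m_i!\,\prod_j n_j!}\,\prod_i u\bigl(\tfrac{m_i-1}{2}\bigr),
$$
where $n_j$ is the multiplicity in the multiset of the distinct value $j$. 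Each prefactor counts set-partitions of $\{1,\ldots,2n\}$ into $2k$ unordered blocks of the prescribed sizes, hence is a nonnegative integer; combined with the integrality of the $u$-values, this forces $r(n,k)\in\Z$. The recurrence in turn yields an inductive proof of $d(n)\in\Z$ starting from $d(0)=1$.

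The principal obstacle is this last integrality step: coefficient matching is routine bookkeeping, but the reparametrisation $F(s)=sU(s^2)$ is essential — it converts the ordinary-generating-function denominators $(2j+1)!$ of $U$ into exponential-generating-function denominators $m!$ of $F$, at which point the standard multinomial / set-partition integrality argument applies.
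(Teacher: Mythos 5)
Your proposal is correct and follows essentially the same route as the paper: coefficient extraction from \eqref{eq:dofn-genfun} using $U(0)=1$, $r(n,n)=1$ and the vanishing of the $k=0$ term, followed by the substitution $t=s^2$ turning $t^kU(t)^{2k}$ into the $2k$th power of the exponential generating function $\sum_j \tfrac{u(j)}{(2j+1)!}s^{2j+1}$. The only cosmetic difference is that the paper cites the exponential formula from Stanley as a black box for the integrality of the resulting coefficients, whereas you unpack the same set-partition/multinomial count explicitly.
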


\begin{proof} Thinking of \eqref{eq:rofnk-def} as also defining $r(n,k)$ when $k=0$, 
by comparing coefficients of like powers of $t$ in \eqref{eq:dofn-genfun} and \eqref{eq:vofn-taylor} we get that the equation
$$ v(n) = \sum_{k=0}^{n} r(n,k) d(k) $$
holds. Observing that $r(n,0)=0$ (if $n\ge1$) and $r(n,n)=1$ gives \eqref{eq:dofn-recurrence}.

It remains to prove that $r(n,k)$ is an integer for all $1\le k\le n$. This is a consequence of the exponential formula from combinatorics \cite[Ch.~5]{stanley-ec2}, which gives a combinatorial interpretation for the coefficients $b_{n,k}$ in the power series expansion
$$ 
\exp\left(t\sum_{j=1}^\infty \frac{a_j}{j!} x^j \right) =
\sum_{n,k=0}^\infty \frac{b_{n,k}}{n!} x^n t^k,
$$
and in particular implies that if all the $a_j$'s are integers then so are the $b_{n,k}$'s. Furthermore, $b_{n,k}$ is given explicitly by
\begin{equation} \label{eq:expform-coeff}
b_{n,k} = \frac{n!}{k!} [x^n] \left( \sum_{j=0}^\infty \frac{a_j}{j!}x^j\right)^k.
\end{equation}
To see why this is precisely what we need, observe that 
\eqref{eq:rofnk-def} can be rewritten as
\begin{align*}
r(n,k) &= 2^{n-k} \frac{(2n)!}{(2k)!} [t^n]
\left( \sum_{j=0}^\infty \frac{u(j)}{(2j+1)!} t^{j+1/2} \right)^{2k}
\\
&=
2^{n-k} \frac{(2n)!}{(2k)!} [s^{2n}]
\left( \sum_{j=0}^\infty \frac{u(j)}{(2j+1)!} s^{2j+1} \right)^{2k},
\end{align*}
which coincides with $2^{n-k} b_{2n,2k}$ in \eqref{eq:expform-coeff} if the coefficient sequence $(a_j)_{j=1}^\infty$ is defined by $a_{2j}=0$, $a_{2j+1}=u(j)$.
\end{proof}

\begin{corr} $d(n)$ is an integer for all $n\ge0$.
\end{corr}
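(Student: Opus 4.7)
The corollary follows from a straightforward strong induction on $n$, using the recurrence relation \eqref{eq:dofn-recurrence} from Theorem~\ref{thm:recurrence-dofn} together with the integrality results already proved for the auxiliary sequences.

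For the base case, $d(0) = 1$ is an integer by the definition of the sequence (as recorded in the initial values of Theorem~\ref{thm:theta-taylor}). For the inductive step, assuming that $d(0), d(1), \ldots, d(n-1)$ are all integers, the recurrence
\[
d(n) = v(n) - \sum_{k=1}^{n-1} r(n,k)\, d(k)
\]
expresses $d(n)$ as a $\Z$-linear combination of previously established integers: $v(n) \in \Z$ by the lemma proving the recurrence \eqref{eq:vofn-recurrence} (or its rewriting \eqref{eq:vofn-recurrence2}), and $r(n,k) \in \Z$ for $1 \le k \le n-1$ by the exponential formula argument given within the proof of Theorem~\ref{thm:recurrence-dofn}. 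Hence $d(n) \in \Z$.

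There is essentially no obstacle here, since all the hard work has already been done in establishing the recurrence and the integrality of its coefficients; the corollary is just the syntactic consequence of packaging those facts together via induction. The only thing worth emphasizing explicitly is that the recurrence is triangular (i.e., $d(n)$ depends only on $d(0), \ldots, d(n-1)$), so induction on $n$ applies without complication.
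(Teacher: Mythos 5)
Your proof is correct and is exactly the argument the paper intends: the corollary is stated without proof precisely because it follows by triangular induction from the recurrence \eqref{eq:dofn-recurrence} together with the already-established integrality of $v(n)$ and $r(n,k)$, with base case $d(0)=\centeredtheta(0)/\theta_3(1)=1$. No issues.
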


Here are the first few entries in the array of numbers $(r(n,k))_{1\le k\le n}$ (formatted in matrix notation as the lower triangular part of an infinite matrix):
$$
(r(n,k))_{1\le k\le n}=
\begin{pmatrix}
 1
\\
48 & 1
\\
7584 & 240 & 1
\\
2515468 & 97664 & 672 & 1
\\
1432498176 & 63221760 & 560448 & 1440 & 1 \\
\vdots & \vdots & \vdots & \vdots & \vdots & \ddots
\end{pmatrix}
$$

The formulas in this section are implemented in a Maple software package written by Doron Zeilberger. \cite{zeilberger}

\section{Proof of Theorem~\ref{thm:theta-taylor}}

\label{sec:proof-theta-taylor}

First, note that for any integer $n\ge0$, we have the generating function identity
$$
(1-4u)^{-(4n+1)/2}=
\frac{(2n)!}{(4n)!} \sum_{m=2n}^\infty
\frac{(2m)!}{m!(m-2n)!}
u^{m-2n} \qquad (|u|<1/4)
$$
(apply the binomial theorem, or start with the the case $n=0$, which is the standard generating function identity 
$
\sum_{m=0}^\infty \binom{2m}{m} u^m = \frac{1}{\sqrt{1-4u}},
$
and differentiate $2n$ times).

Now, still working with the expansion \eqref{eq:sigma-taylor} as our definition of the sequence $(d(n))_{n=0}^\infty$, we apply \eqref{eq:sigma-theta} to get that
\begin{align*}
\frac{\theta_3(x)}{\theta_3(1)} 
&= 
\frac{\sqrt{2}}{\sqrt{1+x}}
\sum_{n=0}^\infty \frac{d(n)}{(2n)!} \Phi^n \left(
\frac{1-x}{1+x}\right)^{2n}
\\
&=
\frac{\sqrt{2}}{\sqrt{1+x}}
\sum_{n=0}^\infty \frac{d(n)}{(2n)!} \left(\frac{\Omega}{4}\right)^n \left(
\frac{1-x}{2}\right)^{2n}
\left(
\frac{1+x}{2}\right)^{-2n}
\\
&=
\sum_{n=0}^\infty \frac{4^n d(n)}{(2n)!} \Omega^n \left(
\frac{1-x}{8}\right)^{2n}
\left(
\frac{1+x}{2}\right)^{-(4n+1)/2}
\\
&=
\sum_{n=0}^\infty \frac{4^n d(n)}{(4n)!} \Omega^n \left(
\frac{1-x}{8}\right)^{2n}
\sum_{m=2n}^\infty
\frac{(2m)!}{m!(m-2n)!}
\left(\frac{1-x}{8}\right)^{m-2n}
\\
&=
\sum_{n=0}^\infty \frac{4^n d(n)}{(4n)!} \Omega^n 
\sum_{m=2n}^\infty
\frac{2m!}{m!(m-2n)!}
\left(\frac{1-x}{8}\right)^{m}
\\
&=
\sum_{m=0}^\infty \frac{(-1)^m}{8^m m!}  \left(
\sum_{n=0}^{\lfloor m/2\rfloor} 
\frac{4^n(2m)!}{(4n)!(m-2n)! } d(n) \Omega^n
\right)(x-1)^m.
\end{align*}
Equating the coefficient of $(x-1)^m$ in the last expression to $\frac{\theta_3^{(m)}(1)}{\theta_3(1)\,m!}$ gives~\eqref{eq:theta-taylor}. Since we already proved in the previous section that the $d(n)$'s are integers, this completes the proof of Theorem~\ref{thm:theta-taylor}.
\qed

\section{Some infinite sums}

\label{sec:infsums}

In this section we apply our results to prove explicit formulas for several interesting infinite series.

\begin{prop}
For any integer $k\ge 0$ we have
\begin{equation} \label{eq:infseries-thetaderiv}
\sum_{n=-\infty}^\infty n^{2k}e^{-\pi n^2} = 
\frac{\pi^{1/4}}{\Gamma\left(\tfrac34\right)}
\cdot \frac{1}{(4\pi)^k} \sum_{j=0}^{\lfloor k/2\rfloor} \frac{(2k)!}{2^{k-2j} (4j)!(k-2j)!} d(j) \Omega^j,
\end{equation}
(with the convention that $0^0 = 1$ being used to interpret the term $n=0$ on the left-hand side in the case $k=0$).
\end{prop}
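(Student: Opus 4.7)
The plan is to observe that the left-hand side of \eqref{eq:infseries-thetaderiv} is, up to a simple factor of $(-\pi)^k$, precisely the $k$th derivative of $\theta_3$ at $x=1$, and then apply Theorem~\ref{thm:theta-taylor} directly. The convention $0^0=1$ is exactly what is needed to make the formula $\theta_3^{(k)}(x)=\sum_{n\in\Z}(-\pi n^2)^k e^{-\pi n^2 x}$ valid (including the $n=0$ term) for all $k\ge 0$ simultaneously.

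More concretely, the first step is to differentiate the series \eqref{eq:def-theta} termwise $k$ times; the uniform convergence of the differentiated series on compact subsets of $\{\re x>0\}$ is routine and justifies the interchange. Setting $x=1$ yields
$$
\theta_3^{(k)}(1)\;=\;(-\pi)^k\sum_{n=-\infty}^\infty n^{2k}e^{-\pi n^2},
$$
so that
$$
\sum_{n=-\infty}^\infty n^{2k}e^{-\pi n^2}\;=\;\frac{(-1)^k}{\pi^k}\,\theta_3^{(k)}(1).
$$
The second step is to substitute the evaluation of $\theta_3^{(k)}(1)$ from \eqref{eq:theta-taylor}; the two factors of $(-1)^k$ cancel, and the factors of $\pi^{-k}$ and $4^{-k}$ combine into the desired $(4\pi)^{-k}$, producing the sum on the right-hand side of \eqref{eq:infseries-thetaderiv} multiplied by $\theta_3(1)$.

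The final step is to rewrite the prefactor $\theta_3(1)=\Gamma(\tfrac14)/(\sqrt{2}\,\pi^{3/4})$ from \eqref{eq:theta-eval} in the equivalent form $\pi^{1/4}/\Gamma(\tfrac34)$ claimed in \eqref{eq:infseries-thetaderiv}. This is immediate from Euler's reflection formula
$$
\Gamma\!\left(\tfrac14\right)\Gamma\!\left(\tfrac34\right)=\frac{\pi}{\sin(\pi/4)}=\pi\sqrt{2},
$$
which gives $\Gamma(\tfrac14)/(\sqrt{2}\,\pi^{3/4})=\pi^{1/4}/\Gamma(\tfrac34)$. There is no real obstacle here, since all the work has already been done in Theorem~\ref{thm:theta-taylor}; the proposition is essentially a repackaging of that theorem in the language of the original defining series of $\theta_3$, and the only minor point requiring care is the remark about $0^0$ so that the $k=0$ case (which recovers the classical evaluation $\sum_{n\in\Z}e^{-\pi n^2}=\pi^{1/4}/\Gamma(\tfrac34)$) is handled uniformly with the rest.
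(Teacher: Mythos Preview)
Your proof is correct and follows essentially the same approach as the paper: recognize the left-hand side as $(-\pi)^{-k}\theta_3^{(k)}(1)$ via termwise differentiation of \eqref{eq:def-theta}, then substitute \eqref{eq:theta-taylor}. Your additional remark that $\theta_3(1)=\pi^{1/4}/\Gamma(\tfrac34)$ follows from \eqref{eq:theta-eval} and the reflection formula is a welcome clarification that the paper leaves implicit.
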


\begin{proof}
This is simply a reformulation of \eqref{eq:theta-taylor}, expressing the derivative $\theta_3^{(n)}(1)$ explicitly as an infinite sum and moving the factor $(-\pi)^{k}$ from the differentiation of $e^{-\pi n^2 x}$ to the right-hand side.
\end{proof}

By replacing the monomial $n^{2k}$ on the left-hand side of \eqref{eq:infseries-thetaderiv} with a Hermite polynomial, the right-hand side is simplified considerably.

\begin{prop}
Let
$$ H_m(x) = (-1)^n e^{x^2}\frac{d^m}{dx^m} e^{-x^2} $$
denote the $m$th Hermite polynomial.
Then we have the identity
\begin{equation} \label{eq:infsum-hermite}
\sum_{n=-\infty}^\infty e^{-\pi n^2} H_{2k}(\sqrt{2\pi} n)
= \begin{cases}
\theta_3(1) 4^k \Phi^{k/2} d(k/2) & \textrm{$k$ even},
\\
0 & \textrm{$k$ odd},
\end{cases}
\qquad (k\ge 0).
\end{equation}
\end{prop}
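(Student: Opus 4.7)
The plan is to use the explicit power-series expansion of the Hermite polynomial, substitute the previous proposition's formula for the even power sums $\sum_n n^{2i} e^{-\pi n^2}$, and then collapse the resulting double sum with a standard finite-difference identity.

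First I would write
$$
H_{2k}(\sqrt{2\pi}\, n) = (2k)! \sum_{i=0}^{k} \frac{(-1)^{k-i}(8\pi)^i}{(k-i)!(2i)!}\, n^{2i},
$$
which follows from the explicit formula $H_m(x)=m!\sum_{j}(-1)^j(2x)^{m-2j}/(j!(m-2j)!)$ after the reindexing $i=k-j$. Multiplying by $e^{-\pi n^2}$ and summing, this reduces the left-hand side of \eqref{eq:infsum-hermite} to a finite linear combination of the quantities $\sum_n n^{2i} e^{-\pi n^2}$, each of which is given by \eqref{eq:infseries-thetaderiv} (using that $\pi^{1/4}/\Gamma(\tfrac34) = \theta_3(1)$, which is equivalent to \eqref{eq:theta-eval} via $\Gamma(\tfrac14)\Gamma(\tfrac34)=\sqrt{2}\,\pi$).

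After substituting, the key move is to interchange the order of summation, pulling the sum over $j$ (indexing $d(j)\Omega^j$) to the outside. The factor of $(4\pi)^{-i}$ from \eqref{eq:infseries-thetaderiv} combines with $(8\pi)^i$ in the Hermite expansion to give $2^i$, and the inner sum in $i$ (now running from $i=2j$ up to $k$ because of the $(i-2j)!$ in the denominator) becomes, after the substitution $\ell = i-2j$,
$$
\sum_{\ell=0}^{k-2j}\frac{(-1)^{k-2j-\ell}}{(k-2j-\ell)!\,\ell!} \;=\; \frac{(1-1)^{k-2j}}{(k-2j)!},
$$
which is $0$ unless $k=2j$, in which case it equals $1$. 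This annihilation is the substance of the result: it instantly forces the sum to vanish when $k$ is odd, and when $k$ is even it retains only the single term $j=k/2$.

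Retaining that surviving term, the prefactors multiply out to $(2k)! \cdot \theta_3(1) \cdot d(k/2)\,\Omega^{k/2}\cdot 4^{k/2}/(2k)! = 2^k \theta_3(1)\, \Omega^{k/2}\, d(k/2)$, which equals the claimed $\theta_3(1)\, 4^k \Phi^{k/2}\, d(k/2)$ after invoking $\Phi=\Omega/4$. The main obstacle, though it is hardly serious, is merely the bookkeeping of factors $2,\pi,\Phi,\Omega$; once the inner sum is recognized as a binomial expansion of $(1-1)^{k-2j}$ the conclusion is immediate.
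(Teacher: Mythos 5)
Your argument is correct, but it follows a genuinely different route from the paper's. The paper proves \eqref{eq:infsum-hermite} directly from the definition \eqref{eq:def-sigma} and the expansion \eqref{eq:sigma-taylor}: it writes the left-hand side as $\sum_n \frac{d^k}{dz^k}\big|_{z=0}\bigl[(1+z)^{-1/2}\exp\bigl(-\pi n^2\tfrac{1-z}{1+z}\bigr)\bigr]$ and reduces everything to the known bivariate Hermite generating function \eqref{eq:hermite-genfun1}, so the Hermite polynomials enter conceptually as the functions that diagonalize the change of variables $z\mapsto\frac{1-x}{1+x}$. You instead expand $H_{2k}$ by its explicit formula, feed in the moment evaluation \eqref{eq:infseries-thetaderiv} (itself a restatement of Theorem~\ref{thm:theta-taylor}), swap the two finite sums, and observe that the inner sum telescopes to $(1-1)^{k-2j}/(k-2j)!$, which annihilates every term except $j=k/2$; your constant bookkeeping checks out ($\pi^{1/4}/\Gamma(\tfrac34)=\theta_3(1)$, the $(8\pi)^i/(4\pi)^i=2^i$ and $2^i/2^{i-2j}=4^j$ simplifications, and $2^k\Omega^{k/2}=4^k\Phi^{k/2}$ are all correct). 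The trade-off is that your proof is more elementary and self-contained within the paper (no appeal to the W\"unsche identity), but it is logically downstream of Theorem~\ref{thm:theta-taylor}, whereas the paper's proof needs only \eqref{eq:sigma-taylor} and in effect explains \emph{why} Hermite polynomials produce the clean diagonal answer rather than verifying it by a fortunate binomial cancellation.
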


\begin{proof}
Combining \eqref{eq:def-theta}, \eqref{eq:def-sigma} and \eqref{eq:sigma-taylor} gives that
the expression on the right-hand side of \eqref{eq:infsum-hermite} is equal to
$$ \sum_{n=-\infty}^\infty
\frac{d^{k}}{dz^{k}}_{\raisebox{2pt}{\big|}z=0} \left[
\frac{1}{\sqrt{1+z}} \exp\left(-\pi n^2 \frac{1-z}{1+z}\right)
\right].
$$
So we see that proving \eqref{eq:infsum-hermite} reduces to showing that
$$
\frac{d^{k}}{dz^{k}}_{\raisebox{2pt}{\big|}z=0} \left[
\frac{1}{\sqrt{1+z}} \exp\left(-X \frac{1-z}{1+z}\right)
\right]
=
4^{-k} e^{-X} H_{2k}(\sqrt{2X}),
$$
where $X>0$ is a parameter.
It is easy to see that this is equivalent to the bivariate generating function identity
\begin{equation} \label{eq:hermite-genfun1}
\frac{1}{\sqrt{1+z}} \exp\left(\frac{2z X}{1+z}\right)
=
\sum_{m=0}^\infty \frac{1}{m!} H_{2m}(\sqrt{2X}) \left(\frac{z}{4}\right)^m.
\end{equation}
This in turn is a (trivial rescaling of a) known identity; see, e.g., \cite[Eq.~(3.5)]{wunsche}.
\end{proof}

Note that \eqref{eq:infsum-hermite} gives another way to evaluate the $d(n)$'s numerically.

\section{A differential equation for $\centeredtheta(z)$}

\label{sec:sigma-ode}

Jacobi found that $\theta_3(x)$ satisfies the ordinary differential equation
\begin{align}
\big(
y^2 y''' - 15 y \,
y' \, y'' +
30 (y')^3
\big)^2
+ 32 \left( y\, y'' - 3 (y')^2\right)^3
=
\pi^2 y^{10} \left( y\, y'' - 3 (y')^2\right)^2.
\label{eq:theta-ode}
\end{align}
a nonlinear, third-order equation \cite{ohyama}. As a possible alternative approach to studying the Taylor coefficients $d(n)$, we use this to derive an ordinary differential equation satisfied by the associated function $\centeredtheta(z)$. Interestingly, the equation $\centeredtheta(z)$ satisfies is almost identical to \eqref{eq:theta-ode}.

\begin{thm}
The function $\centeredtheta(z)$ satisfies the ordinary differential equation
\begin{equation}
\big(
y^2 y''' - 15 y \,
y' \, y'' +
30 (y')^3
\big)^2
+ 32 \left( y\, y'' - 3 (y')^2\right)^3
=
4\pi^2 y^{10} \left( y\, y'' - 3 (y')^2\right)^2.
\label{eq:sigma-ode}
\end{equation}
\end{thm}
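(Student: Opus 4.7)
The plan is to derive \eqref{eq:sigma-ode} directly from Jacobi's ODE \eqref{eq:theta-ode} by a change of variables using \eqref{eq:sigma-theta}. Abbreviating $y(z) = \centeredtheta(z)$, $w = 1+z$, and $x = (1-z)/(1+z)$ so that $\theta_3(x) = w^{1/2} y(z)$ with $dx/dz = -2/w^2$, repeated application of the chain rule yields
\begin{align*}
\theta_3'(x) &= -\tfrac{w^{3/2}}{4}\bigl(y + 2w y'\bigr), \\
\theta_3''(x) &= \tfrac{w^{5/2}}{16}\bigl(3 y + 12 w y' + 4 w^2 y''\bigr), \\
\theta_3'''(x) &= -\tfrac{w^{7/2}}{64}\bigl(15 y + 90 w y' + 60 w^2 y'' + 8 w^3 y'''\bigr).
\end{align*}

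The key step is to combine these into the two nonlinear differential polynomials
$$ S(f) := f f'' - 3(f')^2, \qquad W(f) := f^2 f''' - 15 f f' f'' + 30(f')^3 $$
that appear in \eqref{eq:theta-ode}, and to verify the transformation identities
$$ S(\theta_3) = \tfrac{(1+z)^5}{4}\,S(y), \qquad W(\theta_3) = -\tfrac{(1+z)^{15/2}}{8}\,W(y). $$
For $S(\theta_3)$, all terms involving $y^2$ and $w y y'$ cancel between $\theta_3 \theta_3''$ and $3(\theta_3')^2$, leaving only a multiple of $S(y)$. For $W(\theta_3)$, expanding $\theta_3^2 \theta_3''' - 15 \theta_3 \theta_3' \theta_3'' + 30 (\theta_3')^3$ produces seven monomial types $y^3,\ w y^2 y',\ w^2 y^2 y'',\ w^2 y(y')^2,\ w^3 y^2 y''',\ w^3 y y' y'',\ w^3 (y')^3$; one checks that the coefficients of the first four vanish identically while the last three occur in the ratio $8:-120:240 = 1:-15:30$, reassembling to $8 w^3 W(y)$. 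These cancellations reflect the fact that $S$ and $W$ are classical differential covariants under the $\mathrm{PGL}_2$-action on weight-$1/2$ objects.

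The remainder is a homogeneity count. Substituting into \eqref{eq:theta-ode} and using $\theta_3^{10} = w^5 y^{10}$, the left-hand side becomes $\tfrac{w^{15}}{64}\bigl(W(y)^2 + 32 S(y)^3\bigr)$, while the right-hand side becomes $\pi^2 \cdot w^5 y^{10} \cdot \tfrac{w^{10}}{16} S(y)^2 = \tfrac{\pi^2 w^{15}}{16}\, y^{10} S(y)^2$. Dividing through by the common factor $w^{15}/64$ produces exactly \eqref{eq:sigma-ode}, with the constant $\pi^2$ promoted to $\tfrac{64}{16}\pi^2 = 4\pi^2$.

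The main obstacle is purely algebraic: checking the transformation law for $W(\theta_3)$ requires tracking seven monomial families and verifying four separate cancellations along with the ratios of the three surviving coefficients. This is routine by hand and instantaneous by computer algebra, but a more conceptual proof would begin from the observation that $S$ and $W$ arise as transvectants in the $\mathrm{SL}_2$-equivariant differential structure underlying \eqref{eq:theta-ode}, from which the scaling factors $(1+z)^5/4$ and $-(1+z)^{15/2}/8$ would follow directly from weight considerations.
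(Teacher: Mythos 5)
Your proposal is correct and follows essentially the same route as the paper, whose proof simply says to substitute $\theta_3(x)=\sqrt{2/(1+x)}\,\centeredtheta\!\left(\tfrac{1-x}{1+x}\right)$ into Jacobi's equation \eqref{eq:theta-ode} and leaves the ``mechanical calculation'' to the reader; you have supplied those details and they check out, including the covariance identities $S(\theta_3)=\tfrac{(1+z)^5}{4}S(y)$ and $W(\theta_3)=-\tfrac{(1+z)^{15/2}}{8}W(y)$ and the final constant $\tfrac{64}{16}\pi^2=4\pi^2$. (One immaterial slip: the surviving coefficients in $W(\theta_3)$ are $-8,\,+120,\,-240$ rather than $8,\,-120,\,240$, consistent with the minus sign in your displayed transformation law; since $W$ enters \eqref{eq:theta-ode} squared, this does not affect the conclusion.)
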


\begin{proof}
This is a mechanical calculation: substitute
$y=\theta(x)=\frac{\sqrt{2}}{\sqrt{1+x}}\centeredtheta\left(\frac{1+x}{1-x}\right)$ into \eqref{eq:theta-ode} and compute both sides, then simplify algebraically. The details are left to the reader.
\end{proof}

It is useful to rewrite \eqref{eq:sigma-ode} as an equation satisfied by the rescaled version of $\centeredtheta(z)$ defined by
$$ \rescaledcenteredtheta(z) = \frac{1}{\theta(1)} \centeredtheta\left(\frac{z}{\sqrt{\Phi}}\right) = \sum_{n=0} \frac{d(n)}{(2n)!} z^{2n}
\qquad (|z| \le \sqrt{\Phi}),
$$
since $\rescaledcenteredtheta(z)$ is a purely combinatorial generating function whose Taylor expansion contains no transcendental constants. A simple rescaling of \eqref{eq:sigma-ode} yields the following result.

\begin{corr}
The function $\rescaledcenteredtheta(z)$ satisfies the ordinary differential equation
\begin{equation}
\big(
y^2 y''' - 15 y \,
y' \, y'' +
30 (y')^3
\big)^2
+ 32 \left( y\, y'' - 3 (y')^2\right)^3
=
32 y^{10} \left( y\, y'' - 3 (y')^2\right)^2.
\label{eq:sigma-ode-rescaled}
\end{equation}
\end{corr}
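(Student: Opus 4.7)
The plan is to carry out exactly the rescaling that the statement of the corollary advertises, keeping careful track of how each of the three homogeneous clusters of derivatives transforms. From the definition $\widehat{\sigma}_3(z) = \frac{1}{\theta_3(1)} \centeredtheta(z/\sqrt{\Phi})$, we have equivalently $\centeredtheta(w) = \theta_3(1)\,\widehat{\sigma}_3(\sqrt{\Phi}\,w)$. I would substitute $y(w) = \centeredtheta(w)$ of this form into the ODE \eqref{eq:sigma-ode} (already established in the preceding theorem) and track the powers of $\theta_3(1)$ and $\sqrt{\Phi}$ coming from the chain rule at each derivative.

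Each of the expressions $y^2y'''$, $yy'y''$, $(y')^3$ is homogeneous of degree $3$ in $y$ and carries three total derivatives, so each picks up the common factor $\theta_3(1)^3\Phi^{3/2}$; hence the whole first bracket on the left, once squared, pulls out the factor $\theta_3(1)^6\Phi^3$. Similarly $yy''$ and $(y')^2$ are each homogeneous of degree $2$ with two derivatives, so $(yy'' - 3(y')^2)^3$ pulls out $\theta_3(1)^6\Phi^3$, matching the first term. On the right-hand side, however, $y^{10}$ contributes $\theta_3(1)^{10}$ (no derivatives), and $(yy''-3(y')^2)^2$ contributes $\theta_3(1)^4\Phi^2$, for a total factor of $\theta_3(1)^{14}\Phi^2$. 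Dividing the transformed equation by $\theta_3(1)^6\Phi^3$ reduces it to
\[
\bigl(y^2y''' - 15yy'y'' + 30(y')^3\bigr)^2 + 32\bigl(yy''-3(y')^2\bigr)^3 = \frac{4\pi^2\,\theta_3(1)^8}{\Phi}\,y^{10}\bigl(yy''-3(y')^2\bigr)^2,
\]
where now $y$ stands for $\widehat{\sigma}_3(z)$.

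It then only remains to verify that the constant $4\pi^2\theta_3(1)^8/\Phi$ equals $32$. Using the explicit evaluations $\theta_3(1) = \Gamma(\tfrac14)/(\sqrt{2}\pi^{3/4})$ (so that $\theta_3(1)^8 = \Gamma(\tfrac14)^8/(16\pi^6)$) and $\Phi = \Gamma(\tfrac14)^8/(128\pi^4)$ from the introduction, the ratio simplifies immediately:
\[
\frac{4\pi^2\,\theta_3(1)^8}{\Phi} = \frac{4\pi^2}{1}\cdot\frac{\Gamma(\tfrac14)^8}{16\pi^6}\cdot\frac{128\pi^4}{\Gamma(\tfrac14)^8} = \frac{4\cdot 128}{16} = 32,
\]
which yields \eqref{eq:sigma-ode-rescaled}.

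The proof is entirely mechanical; the only conceivable obstacle is a bookkeeping slip when tabulating the degrees in $\theta_3(1)$ and $\sqrt{\Phi}$ for the two sides of \eqref{eq:sigma-ode}. The essential point to double-check is the asymmetry in scaling between the left-hand side (all terms cubic in $y$ with six total derivatives) and the right-hand side ($y^{10}$ together with the lower-order piece). Once that accounting is correct, the numerical identification of the new constant is forced, and the neat cancellation to $32$ is in effect a consistency check on the values of $\theta_3(1)$ and $\Phi$.
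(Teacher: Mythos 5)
Your proposal is correct and is exactly the "simple rescaling of \eqref{eq:sigma-ode}" that the paper invokes without spelling out: the degree/derivative bookkeeping ($\theta_3(1)^6\Phi^3$ on the left versus $\theta_3(1)^{14}\Phi^2$ on the right) and the evaluation $4\pi^2\theta_3(1)^8/\Phi=32$ are both accurate. No issues.
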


\section{Conjectural congruence relations for the $d(n)$'s}

\label{sec:congruences}

Even a casual observation of the sequence $(d(n))_{n=0}^\infty$ reveals that it has interesting arithmetic properties. For example, the congruence relation $d(n)\equiv (-1)^{n-1}\ (\textrm{mod }5)$ is immediately apparent (as an empirical observation). Upon a bit of further inspection, we were led to the following conjectures.

\begin{conj}
\label{conj:cong}
For a finite sequence $(c_1,\ldots,c_k)$, denote by $\overline{(c_1,\ldots,c_k)}$ its periodic extension, and denote $(c_1,\ldots,c_k)^*$ the sequence $(c_1,\ldots,c_k)$ appended by a infinite sequence of zeros.

\medskip
\noindent
(a) The sequence $(d(n))_{n=1}^\infty$ satisfies the following congruences:
\begin{align*}
(d(n) \textrm{ mod }\phantom{0}5)_{n=1}^\infty &= \overline{(1,4)}, \\
(d(n) \textrm{ mod }13)_{n=1}^\infty &= \overline{(
1, 12, 12, 4, 9, 9, 3, 10, 10, 12, 1, 1, 9, 4, 4, 10, 3, 3
)}, \\
(d(n) \textrm{ mod }17)_{n=1}^\infty &= \overline{(
1, 16, 0, 16, 15, 2, 0, 2, 4, 13, 0, 13,}\\& \qquad \ \ \overline{ 9, 8, 0, 8, 16, 1, 0, 1, 2, 15, 0, 15, 13, 4, 0, 4, 8, 9, 0, 9
)},
\\
(d(n) \textrm{ mod }\phantom{0}3)_{n=0}^\infty &= 
(1, 1, 2)^*,
\\
(d(n) \textrm{ mod }\phantom{0}7)_{n=0}^\infty &= 
(1, 1, 6, 2, 2, 2, 1, 0, 3, 0, 6, 0, 6)^*,
\\
(d(n) \textrm{ mod }11)_{n=0}^\infty &= 
(1, 1, 10, 7, 2, 3, 10, 7, 1, 1, 2, 0, 6, 2, 0, 1, 5, 0, \\ & \qquad \ \ 9, 9, 0, 1,
0, 0, 1, 0, 0, 8, 0, 0, 10)^*.
\end{align*}

\noindent
(b) More generally,\,\, for any prime\, $p$ of the form $4k+1$, the sequence $(d(n)\textrm{ mod }p)_{n=1}^\infty$ is periodic, and for any prime $p$ of the form $4k+3$, the sequence has only finitely many nonzero terms.

\end{conj}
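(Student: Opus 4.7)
The dichotomy in part (b) between primes $p\equiv 1\pmod 4$ and $p\equiv 3\pmod 4$ is precisely the split/inert dichotomy for the imaginary quadratic field $\mathbb{Q}(i)$, which is the field of complex multiplication for the elliptic curve $E:y^2=x^3-x$ with $j(E)=1728$. The point $x=1$, around which the author expands $\theta_3$, corresponds to the CM point $\tau=i$ in the upper half-plane (via $\tau=ix$), so $\centeredtheta$ is exactly $\theta_3$ expanded at one of its CM points. The plan is therefore to exploit this structural observation by combining the Villegas--Zagier theory of Taylor expansions of modular forms at CM points \cite{villegas-zagier1, villegas-zagier2} with the $p$-adic / mod-$p$ theory of modular forms \emph{\`a la} Serre and Katz, so as to reduce part (b) to standard statements about the Atkin operator $U_p$ and the Hasse invariant of $E$ at $p$.

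For split primes $p\equiv 1\pmod 4$, the curve $E$ is ordinary at $p$, the Hecke polynomial has a unit root in $\mathbb{Z}_p$, and the renormalized Taylor coefficients should satisfy a linear recurrence modulo $p$ of bounded order whose state space is finite. Pigeonhole then forces $(d(n)\bmod p)_{n\ge 1}$ to be eventually periodic, matching part (a); the explicit periods should be computable from the relevant Hecke eigenvalues. For inert primes $p\equiv 3\pmod 4$, $E$ is supersingular, the Hasse invariant vanishes modulo $p$, and $U_p$ acts nilpotently on the appropriate mod-$p$ space of modular forms; expressing $d(n)$ in terms of iterates of $U_p$ applied to $\theta_3$ should then force $d(n)\equiv 0\pmod p$ for all sufficiently large $n$, and the termination index should correspond to the dimension of the space on which the nilpotent operator acts.

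The main obstacle is that $\theta_3$ has weight $1/2$, so the standard $p$-adic theory of modular forms does not apply directly. I would work around this by passing to an even power such as $\theta_3^2$ or $\theta_3^4$, which is of integer weight, proving the statements there, and then pulling back to $d(n)$ via the square-root generating function identity \eqref{eq:dofn-genfun}; this step itself requires care, since taking $p$-adic square roots of power series can introduce subtleties when the leading term fails to be a square modulo $p$. A secondary difficulty is that the companion function $U(t)$ involves $\twoFone(\tfrac34,\tfrac34;\tfrac32;4t)$, the second solution of the Gauss hypergeometric ODE, which represents a quasi-period rather than a period of the Legendre family $y^2=x(x-1)(x-4t)$, so one must work inside the full differential ring generated by both periods and quasi-periods, and invoke Deuring--Honda type congruences for truncated hypergeometric series to control the mod-$p$ reductions of $V(t)$ and $U(t)$. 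Finally, the very small primes (especially $p=3$ in part (a)) may require ad hoc verification directly from the recurrence of Theorem~\ref{thm:recurrence-dofn}, since the general theory degenerates there.
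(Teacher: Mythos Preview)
The paper does not prove this statement: it is explicitly labeled a \emph{Conjecture}, and Section~\ref{sec:openproblems} lists ``Prove Conjecture~\ref{conj:cong}'' as the first open problem. So there is no proof in the paper to compare your proposal against.

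As for your proposal itself, it is not a proof but a research outline. You correctly identify the structural reason the dichotomy $p\equiv 1$ versus $p\equiv 3\pmod 4$ should matter (ordinary versus supersingular reduction of the CM curve at $\tau=i$), and the Villegas--Zagier/Serre--Katz framework is indeed the natural habitat for such questions. But the proposal openly flags, without resolving, exactly the obstacles that make this an open problem rather than an exercise: the half-integral weight of $\theta_3$, the need to control both periods and quasi-periods of the hypergeometric ODE modulo $p$, and the descent from a statement about $\theta_3^2$ or $\theta_3^4$ back to the $d(n)$ via the square-root identity~\eqref{eq:dofn-genfun}. None of these steps is carried out. Moreover, your pigeonhole argument in the ordinary case would only yield \emph{eventual} periodicity of $(d(n)\bmod p)_{n\ge 1}$, whereas the conjecture asserts periodicity from $n=1$; closing that gap would require additional information about the initial segment, which you do not supply. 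In short, your write-up is a reasonable sketch of where a proof might come from, but it is not itself a proof, and the paper makes no claim to have one either.
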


Note that in the congruences modulo primes $p=4k+1$ described above, the term $d(0)=1$ is excluded as it does not follow the periodical pattern of the congruence.

\section{Open problems}

\label{sec:openproblems}

We conclude with a few open problems.

\begin{enumerate}

\item Prove Conjecture~\ref{conj:cong}.

\item Extend Conjecture~\ref{conj:cong} further, for example by studying the period of the sequence $(d(n) \textrm{ mod }p)_{n=1}^\infty$ of residues for primes $p=4k+1$, and congruences modulo higher powers of primes.

\item Study integer sequences arising from the Taylor coefficient sequences of other modular forms. Develop a general theory of when such sequences arise and find connections between them and other problems in number theory.

\item Find a combinatorial interpretation for the sequence $(d(n))_{n=0}^\infty$ (the possible existence of such an interpretation is suggested by the use of the combinatorial formula in the proof of Theorem~\ref{thm:recurrence-dofn}).

\item What can be said about the sequence of signs of $d(n)$?

\item The function $\theta_3(x)$ is intimately connected to the theory of the Riemann zeta function via the classical relation
$$
\pi^{-s/2} \Gamma(s/2)\zeta(s) = \frac12 \int_0^\infty (\theta_3(x)-1) x^{s/2-1}\,dx.
$$
By a change of variables $t=(1-x)/(1+x)$ this can be rewritten as
$$
\pi^{-s/2} \Gamma(s/2)\zeta(s) = \int_{-1}^1 \left(\centeredtheta(t)-\frac{1}{\sqrt{1+t}}\right) (1-t)^{s/2-1} (1+t)^{(1-s)/2-1} \, dt.
$$
Can this identity, combined with the Taylor expansion \eqref{eq:sigma-taylor} and additional observations about the coefficient sequence $(d(n))_{n=0}^\infty$, be used to deduce new facts about the Riemann zeta function?

\end{enumerate}

\section*{Appendix: a table of the values $(d(n))_{n=0}^{20}$}

\begin{table}[h!]
$$
\begin{array}{|l|r|}
\hline
n & d(n) \\
\hline
0 & 1
\\
1 & 1
\\
2 & -1
\\
3 & 51
\\
4 & 849
\\
5 & -26199
\\
6 & 1341999
\\
7 & 82018251
\\
8 & 18703396449
\\
9 & -993278479599
\\
10 & -78795859032801
\\
11 & 38711746282537251
\\ 
12 & -923351332174412751
\\
13 & 4688204953344642495801
\\
14 & 501271295036889289819599
\\
15 & -89944302490128540556106949
\\
16 & -104694993963067299023875442751
\\
17 & 63396004159664562363095882996001
\\
18 & -10788308985765935467659682700676801
\\
19 & 8534133600987639916144760846045541651
\\
20 & 16747176493521483129100021404620455570449
\\
\hline
\end{array}
$$
\caption{The initial values of the sequence $(d(n))_{n=0}^\infty$}
\label{table:sequence}
\end{table}

\newpage

\end{document}